\documentclass[11pt]{amsart}

\usepackage{amsfonts,amssymb,amsthm,amsmath,color,subfigure,tikz}
\usepackage{float,mathrsfs,graphicx}
\usepackage{epsfig}
\usepackage{caption}
\usepackage[all,cmtip]{xy}

\numberwithin{equation}{section}
\def\eps{\epsilon}
\def\R{{\bf R}}
\def\n{{\bf n}}

\def\ds{\displaystyle}
\def\bnu{\n}


\begin{document}
\title{Fick's law selects the Neumann boundary condition}
\author{Danielle Hilhorst}
\address[Danielle Hilhorst]{CNRS and Laboratoire de Mathématiques, University Paris-Saclay, 91405 Orsay Cedex, France}
\email{danielle.hilhorst@universite-paris-saclay.fr}

\author{Seung-Min Kang}
\address[Seung-Min Kang]{Department of Mathematical Sciences, KAIST,  291 Daehak-ro, Yuseong-gu, Daejeon, 34141, Republic of Korea }
\email{sngmn20@gmail.com}

\author{Ho-Youn Kim}
\address[Ho-Youn Kim]{Computer, Electrical and Mathematical Sciences and Engineering Division, 4700 King Abdullah University of Science and Technology, Thuwal, 23955-6900, Kingdom of Saudi Arabia}
\email{ghsl0615@gmail.com, hoyoun.kim@kaust.edu.sa}

\author{Yong-Jung Kim}
\address[Yong-Jung Kim]{Department of Mathematical Sciences, KAIST,  291 Daehak-ro, Yuseong-gu, Daejeon, 34141, Republic of Korea }
\email{yongkim@kaist.edu}

%

\theoremstyle{plain}
\newtheorem{theorem}{Theorem}
\newtheorem{proposition}[theorem]{Proposition}
\newtheorem{assumption}[theorem]{Assumption}
\newtheorem{lemma}[theorem]{Lemma}
\newtheorem{corollary}[theorem]{Corollary}
\newtheorem{definition}[theorem]{Definition}
\newtheorem{claim}[theorem]{Claim}
\newtheorem{remark}[theorem]{Remark}
\newtheorem{example}[theorem]{Example}

\newtheorem{problem}{\textcolor{red}{Problem}}
\newtheorem{question}[problem]{\textcolor{red}{Question}}

\numberwithin{equation}{section}
\numberwithin{theorem}{section}
\numberwithin{problem}{section}
\setcounter{tocdepth}{3}

\date{March 3, 2023}

\maketitle
\begin{abstract}
\medskip\noindent
We study the appearance of a boundary condition along an interface between two regions, one with constant diffusivity $1$ and the other with diffusivity $\eps>0$, when $\eps\to0$. In particular, we take Fick's diffusion law  in a context of reaction-diffusion equation with bistable nonlinearity and show that the limit of the reaction-diffusion equation satisfies the homogeneous Neumann boundary condition along the interface. This problem is developed as an application of heterogeneous diffusion laws to study the geometry effect of domain. 
\\

{\bf keywords}: Heterogeneous diffusion equation, Reaction-diffusion equation, Fick's law diffusion, Singular limit, Neumann boundary condition
\end{abstract}

\section{Introduction}

\subsection{Problem setup and results}

We consider an initial value problem for a reaction-diffusion equation,
\begin{equation} \label{Pe0}
\begin{cases}
u_t = \nabla\cdot(D_\eps \nabla u ) + f(u), &(t,x)\in (0,T)\times\Omega, \\
\n\cdot\nabla u =0, & (t,x)\in(0,T)\times\partial\Omega,\\
u(0,x) = u_0(x),&x\in\Omega,
\end{cases}
\tag{$P^\eps_0$}
\end{equation}
where $\Omega\subset\R^n$ is a bounded open set with a smooth boundary $\partial\Omega$, $\n$ is the outward unit normal vector on the boundary, and $u_0\in C(\overline{\Omega})$ is an initial value bounded by
\begin{equation}\label{1.1}
0\le u_0(x)\le M,\qquad  x\in\Omega,\quad M\ge1.
\end{equation}
The reaction function $f$ is continuously differentiable and bistable. More specifically, we take the following hypotheses; for $\alpha\in(0,1)$,
\begin{equation}\label{1.2}
\begin{cases}
f(0) = f(1) = f(\alpha) = 0, \ \  f'(0) <0, \, f'(1) < 0, \, f'(\alpha)>0,\\
f(u) < 0 \ \text{ for } u\in (0,\alpha)\cup(1,\infty), \quad f(u) > 0 \text{ for } u\in (\alpha,1).
\end{cases}
\end{equation}
A typical example for such a bistable non-linearity is $f(u) = u(u-\alpha)(1-u)$. Note that $u=0$ and $u=1$ are stable steady-states, $u=\alpha$ is unstable, and the upper bound $M$ in \eqref{1.1} is taken larger than the bigger stable steady-state $u=1$. The diffusivity $D_\eps$ is given as below. First, the domain $\Omega$ is divided into two parts as in Figure \ref{fig1}. The inner domain $\Omega_1$ ($\overline{\Omega}_1\subset\Omega$) is a connected open set with a smooth boundary denoted by $\Gamma:=\partial\Omega_1$. The outer domain $\Omega_0=\Omega\setminus\overline{\Omega}_1$ is also open and connected. The diffusivity $D_\eps$ is given by 
\begin{equation}\label{1.3}
D_\eps(x)=\begin{cases}
1, & x\in\Omega_1\cup\Gamma,\\
\eps,& x\in\Omega_0.
\end{cases}
\end{equation}

\begin{figure}
\centering
\includegraphics[width=0.6\textwidth]{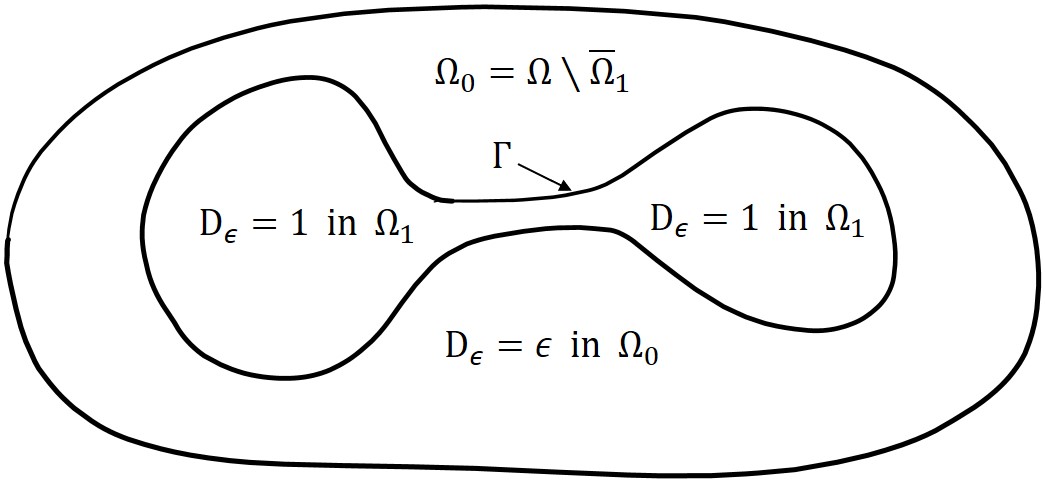}
\caption{\small Components of domain $\Omega$ and diffusivity $D_\eps$.}\label{fig1}
\end{figure}

Let $u$ be the limit of the solution $u^\eps$ of Problem \eqref{Pe0} as $\eps\to0$. Then, since the diffusivity in the inner domain $\Omega_1$ is fixed at $1$, the limit will obviously satisfy 
\[
u_t=\Delta u+f(u), (t,x)\in(0,T)\times\Omega_1.
\]
The missing part is the boundary condition. The purpose of the paper is to show that the limit $u$ is the unique solution of
\begin{equation}\label{P00}
\begin{cases}
u_t=\Delta u+f(u),& (t,x)\in(0,T)\times\Omega_1,\\
\bnu\cdot\nabla u = 0,& (t,x)\in (0,T)\times \partial\Omega_1,\\
u(0,x)=u_0(x),&x\in\Omega_1,
\end{cases}\tag{$P_0^0$}
\end{equation}
where $\bnu$ is the outward unit normal vector on $\Gamma(=\partial\Omega_1)$. Usually, since a boundary condition is needed to complete a second order problem, we often choose the Neumann or Dirichlet boundary conditions. In the paper, we show that the Neumann boundary condition appears naturally in the context of the Problem \eqref{Pe0}. Note that the solution behavior in one of the two domains is influenced by the one in the other domain due to the diffusion. This interaction eventually disappears if $\eps\to0$. However, traces of the interaction remain in the boundary condition at the interface $\Gamma$. The resulting boundary condition may depend on the types of diffusion and reaction function. The diffusion law in Problem \eqref{Pe0} is Fick's diffusion law, and the main conclusion of the paper is that Fick's law selects the Neumann boundary condition. We also prove that, in the outer domain $\Omega_0$, $u^\eps$ converges to the solution $u\in C^1([0,T]; L^\infty(\Omega_0))$ of the initial value problem
\begin{equation}\label{Q00}
	\begin{cases}u_t = f(u), & (t,x)\in(0,T)\times \Omega_0\\
		u(0,x) = u_0(x),&x\in\Omega_0,
	\end{cases}\tag{$Q_0^{0}$}
\end{equation}
which is an ODE system and a boundary condition is not needed.

The main results of the paper are in the following three theorems:
\begin{theorem}[Existence] \label{thm:existence} Let $\Omega\subset\R^n$ be a bounded domain with a smooth boundary, $u_0\in C(\overline\Omega)$ an initial value bounded by \eqref{1.1}, $f$ a continuously differentiable bistable non-linearity satisfying \eqref{1.2}, and $D_\eps$ the diffusivity given by \eqref{1.3}. Then, Problem \eqref{Pe0} possesses a unique weak solution $u^\eps$ and $\|\nabla u^\eps\|_{L^2((0,T)\times\Omega_1)}$ is uniformly bounded with respect to $\eps>0$.
\end{theorem}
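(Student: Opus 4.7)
The plan is to build $u^\eps$ via a standard semilinear parabolic construction applied to a truncated reaction, extract $0 \le u^\eps \le M$ from the comparison principle so that the truncation becomes invisible, and then read off the uniform gradient bound from the basic $L^2$ energy identity. The key point, and in fact the reason the last assertion holds at all, is that $D_\eps \equiv 1$ on $\Omega_1$, so the natural energy estimate already controls $\nabla u^\eps$ on $\Omega_1$ independently of $\eps$.

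For existence, I would first replace $f$ by a smooth globally Lipschitz truncation $\tilde f$ agreeing with $f$ on $[-1, M+1]$. For each fixed $\eps > 0$ the diffusivity lies between $\min(1,\eps)$ and $\max(1,\eps)$, so the bilinear form $a(u,v) := \int_\Omega D_\eps \nabla u \cdot \nabla v\, dx$ is continuous on $H^1(\Omega)$ and satisfies a Gårding inequality. A Galerkin approximation in the standard Neumann Laplacian eigenbasis, combined with a fixed-point step to handle the semilinearity $\tilde f$, then yields a weak solution
\[
u^\eps \in L^2(0,T; H^1(\Omega)) \cap C([0,T]; L^2(\Omega)), \qquad u^\eps_t \in L^2(0,T; H^1(\Omega)^\ast),
\]
of \eqref{Pe0} with $\tilde f$ in place of $f$. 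Since $f(0) = 0$ and $f(M) \le 0$ (because $M \ge 1$ and $f \le 0$ on $[1,\infty)$), the constants $0$ and $M$ are respectively a sub- and a supersolution, so the parabolic comparison principle gives $0 \le u^\eps \le M$. In particular $\tilde f(u^\eps) = f(u^\eps)$ and $u^\eps$ in fact solves \eqref{Pe0}. Uniqueness is obtained by applying the standard energy estimate to the difference of two solutions, invoking the Lipschitz bound of $f$ on $[0,M]$ and Grönwall's inequality.

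The uniform gradient bound is then immediate. Testing the weak equation against $u^\eps$ itself produces the energy identity
\[
\tfrac12\tfrac{d}{dt}\int_\Omega (u^\eps)^2\, dx + \int_\Omega D_\eps |\nabla u^\eps|^2\, dx = \int_\Omega f(u^\eps)\, u^\eps\, dx,
\]
whose right-hand side is, by the $L^\infty$ bound above, dominated by a constant $C(M,f,|\Omega|)$ independent of $\eps$. Discarding the nonnegative contribution from $\Omega_0$ and using $D_\eps \equiv 1$ on $\Omega_1$, one integrates in time to obtain
\[
\int_0^T\!\!\int_{\Omega_1} |\nabla u^\eps|^2\, dx\, dt \le \tfrac12 \|u_0\|_{L^2(\Omega)}^2 + T\, C(M,f,|\Omega|),
\]
which is exactly the claimed $\eps$-independent bound.

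The only genuine subtlety, and what might be called the main obstacle, is that $D_\eps$ is discontinuous across $\Gamma$, whereas the cleanest parabolic theory is customarily stated for smooth coefficients. This is sidestepped by working throughout in $H^1(\Omega)$ rather than separately on $\Omega_0$ and $\Omega_1$: both $u^\eps$ and its test functions carry a single trace on $\Gamma$, and the implicit flux-matching condition $[D_\eps \nabla u^\eps \cdot \bnu]_\Gamma = 0$ is automatically encoded in the identity defining $a(\cdot,\cdot)$. Once this is granted, the construction above is routine.
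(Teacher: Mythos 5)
Your proposal is correct, but it takes a genuinely different route from the paper. You work directly with the discontinuous coefficient $D_\eps$ in the variational framework: since $D_\eps$ is merely bounded and measurable with $\min(1,\eps)\le D_\eps\le 1$, the form $a(u,v)=\int_\Omega D_\eps\nabla u\cdot\nabla v\,dx$ is still bounded and coercive on $H^1(\Omega)$, so Galerkin plus a fixed point for the truncated Lipschitz nonlinearity, followed by the weak comparison principle (testing with $u^-$ and $(u-M)^+$), yields the solution in one pass; the flux-matching across $\Gamma$ is, as you say, encoded in the weak formulation. The paper instead regularises the coefficient: it introduces a smooth $D_{\delta,\eps}$ with $\eps\le D_{\delta,\eps}\le1$, invokes classical parabolic theory (Pao) to get classical solutions $u^{\delta,\eps}$ with $0\le u^{\delta,\eps}\le M$, derives the same energy and duality estimates uniformly in $\delta$ \emph{and} $\eps$, and then passes to the limit $\delta\to0$ via Fr\'echet--Kolmogorov compactness (space and time translate estimates) to produce the weak solution of \eqref{Pe0}. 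Uniqueness via Gronwall and the $\eps$-independent bound $\|\nabla u^\eps\|_{L^2((0,T)\times\Omega_1)}\le C$ from the energy identity with $D_\eps\equiv1$ on $\Omega_1$ are identical in both treatments. Your route is shorter and arguably more natural for the existence statement alone; the paper's detour through $\delta$ buys classical solutions at the approximate level and, more importantly, sets up exactly the translate estimates (\eqref{3.5}, \eqref{time trans omega1}) that are reused verbatim in Section 4 for the strong $L^2((0,T)\times\Omega_1)$ compactness of $\{u^\eps\}$ as $\eps\to0$. If you adopted your construction, you would need to rederive those translate estimates (or an Aubin--Lions argument) directly for $u^\eps$ before proving Theorem \ref{thm:convergence1}; this is straightforward from your bounds on $\nabla u^\eps$ and $u^\eps_t$, so nothing essential is lost. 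One small point to make explicit: your truncation $\tilde f$ should be chosen so that $\tilde f(s)\le L(s-M)$ for $s\ge M$ and $\tilde f(s)s\le Ls^2$ for $s\le0$ (both automatic from $\tilde f(0)=0$, $\tilde f(M)\le0$ and the Lipschitz bound), since the weak comparison argument relies on these sign/growth conditions rather than on a pointwise classical maximum principle.
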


\begin{theorem}[Neumann condition is selected] \label{thm:convergence1} Under the same assumptions as in Theorem \ref{thm:existence}, the solution $u^\eps$ of Problem \eqref{Pe0} converges to the unique solution of Problem \eqref{P00} strongly in $L^2((0,T)\times \Omega_1)$ as $\eps\to0$.
\end{theorem}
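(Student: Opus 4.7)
The plan is to prove convergence by compactness and then to identify the limit by passing to the limit in the weak form of \eqref{Pe0}; the Neumann condition will emerge from the fact that this weak form admits test functions with arbitrary trace on $\Gamma$. First I would collect the uniform estimates needed. The comparison principle (using $f(u)\le 0$ for $u\ge 1$ and $f(u)\ge 0$ for $u\le 0$) gives $0\le u^\eps\le M$ uniformly. Theorem~\ref{thm:existence} already supplies a uniform bound on $\nabla u^\eps$ in $L^2((0,T)\times\Omega_1)$; a standard energy estimate obtained by testing against $u^\eps$ and using $u^\eps\in L^\infty$ produces, in addition, a uniform bound on $\sqrt{\eps}\,\nabla u^\eps$ in $L^2((0,T)\times\Omega_0)$.

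Next I would obtain time compactness. Because the equation is only a transmission problem and $\nabla u^\eps$ is not controlled on all of $\Omega$, I would localize: for test functions $\varphi\in H^1_0(\Omega_1)$ extended by zero to $\Omega$, the weak form of \eqref{Pe0} reduces to a local identity on $\Omega_1$, and the previous estimates yield $u^\eps_t$ bounded in $L^2(0,T;H^{-1}(\Omega_1))$. Combined with the $H^1(\Omega_1)$ bound, the Aubin--Lions lemma (via $H^1(\Omega_1)\hookrightarrow\hookrightarrow L^2(\Omega_1)\hookrightarrow H^{-1}(\Omega_1)$) produces a subsequence with $u^\eps\to u$ strongly in $L^2((0,T)\times\Omega_1)$ and weakly in $L^2(0,T;H^1(\Omega_1))$; the $L^\infty$ bound and continuity of $f$ also give $f(u^\eps)\to f(u)$ in $L^2$.

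To identify the limit I would test \eqref{Pe0} against an arbitrary $\varphi\in C^1([0,T]\times\overline{\Omega_1})$ vanishing at $t=T$, using a bounded $H^1$-extension $\tilde\varphi$ to $\Omega$ (available because $\Gamma$ is smooth) so that $\varphi$ is allowed to have any trace on $\Gamma$. The inner gradient term converges by weak convergence of $\nabla u^\eps$; the outer gradient term equals $\eps\int\nabla u^\eps\cdot\nabla\tilde\varphi=\sqrt{\eps}\int(\sqrt{\eps}\,\nabla u^\eps)\cdot\nabla\tilde\varphi=O(\sqrt{\eps})$ and vanishes; the remaining terms converge by strong convergence of $u^\eps$ and of $f(u^\eps)$. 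The resulting identity is exactly the weak form of \eqref{P00}, with the Neumann condition on $\Gamma$ encoded in the absence of any boundary constraint on $\varphi$. Classical uniqueness for \eqref{P00} then upgrades subsequential convergence to convergence of the whole family.

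The hard part is the time-compactness step, and specifically the choice of topology in which to estimate $\partial_t u^\eps$: because $u^\eps$ satisfies a transmission problem rather than a local PDE on $\Omega_1$, one must test only against functions supported in $\Omega_1$ so that the unbounded gradient on $\Omega_0$ never enters the estimate. Once this localization is in place, the vanishing of the outer flux---the mechanism by which the two subdomains ``decouple'' in the limit---is an immediate consequence of the factor $\sqrt{\eps}$, and the Neumann condition then appears essentially for free.
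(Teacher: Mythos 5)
Your overall architecture --- compactness on $\Omega_1$, vanishing of the outer flux at rate $O(\sqrt\eps)$, identification through the weak form, and uniqueness of \eqref{P00} to upgrade subsequential convergence to convergence of the whole family --- is the same as the paper's. Your compactness step is also sound and is a legitimate variant: localizing $\partial_t u^\eps$ to $L^2(0,T;H^{-1}(\Omega_1))$ by testing only against $H^1_0(\Omega_1)$ functions extended by zero, then applying Aubin--Lions, replaces the paper's Fr\'echet--Kolmogorov argument with space/time translate estimates; the paper's time-translate estimate uses exactly the same localization via a cutoff $\mu\in C_c^\infty(\Omega_1)$, so nothing is lost or gained here beyond packaging.

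The genuine gap is in the identification step. The weak formulation of \eqref{Pe0} is an identity over all of $\Omega$, so when you insert an extension $\tilde\varphi$ of $\varphi\in C^1([0,T]\times\overline{\Omega}_1)$ with nonzero trace on $\Gamma$ (which you must allow in order to read off the Neumann condition), you pick up, in addition to the outer gradient term that you correctly kill with the $O(\sqrt\eps)$ bound, the order-one contributions
\[
-\int_0^T\!\!\int_{\Omega_0}u^\eps\,\tilde\varphi_t\,dx\,dt\;-\;\int_0^T\!\!\int_{\Omega_0}f(u^\eps)\,\tilde\varphi\,dx\,dt\;-\;\int_{\Omega_0}u_0(x)\,\tilde\varphi(0,x)\,dx .
\]
Your proposal never addresses these, so the ``resulting identity'' is not the weak form of \eqref{P00}. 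You also cannot simply pass $f(u^\eps)\to f(\bar u)$ on $\Omega_0$: there you only have weak $L^2$ convergence of $u^\eps$ and $f$ is nonlinear. The paper's resolution is to first pass to the limit with a general test function, obtaining a global identity involving the weak $L^2$ limit $\chi$ of $f(u^\eps)$, then localize to test functions supported in $\Omega_0$ to deduce $\bar u_t=\chi$ and $\bar u(0,\cdot)=u_0$ there; only with this in hand do the three $\Omega_0$ terms cancel (by integration by parts in time) for test functions with arbitrary trace on $\Gamma$, after which the Neumann condition follows as you describe. (The further identification $\chi=f(\bar u)$ on $\Omega_0$, via a Minty-type monotonicity argument, is needed only for Theorem~\ref{thm:convergence2}.) Alternatively, you could repair your argument without analyzing the $\Omega_0$ dynamics by choosing $\tilde\varphi$ supported in an $\eta$-neighbourhood of $\overline{\Omega}_1$, noting that the three extra terms are then $O(\eta)$ uniformly in $\eps$ while the outer flux term is $O(\sqrt{\eps}\,\eta^{-1/2})$, and sending $\eps\to0$ before $\eta\to0$; but some such step is required and is currently missing.
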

Moreover, we also prove the following convergence result.
\begin{theorem}[ODE solution]\label{thm:convergence2}
	Under the same assumption as in Theorem \ref{thm:existence}, the solution $u^\eps$ of Problem \eqref{Pe0} converges to the unique solution of Problem \eqref{Q00} weakly in $L^2((0,T)\times \Omega_0)$ as $\eps\to0$.
\end{theorem}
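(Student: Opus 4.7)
The plan is to combine uniform a priori estimates with a Gronwall-type comparison against the pointwise ODE solution $v$ of \eqref{Q00}. The maximum principle for \eqref{Pe0} (using $f(0)=0$, $f(M)\le 0$, and the Neumann boundary condition on $\partial\Omega$) gives $0\le u^\eps\le M$ uniformly, so $\{u^\eps\}$ and $\{f(u^\eps)\}$ are bounded in $L^\infty((0,T)\times\Omega_0)$, hence in $L^2$. The energy identity underlying Theorem~\ref{thm:existence} also yields $\|\sqrt{D_\eps}\,\nabla u^\eps\|_{L^2((0,T)\times\Omega)}\le C$ and therefore $\|\sqrt{\eps}\,\nabla u^\eps\|_{L^2((0,T)\times\Omega_0)}\le C$, so the diffusive flux $\eps\,\nabla u^\eps$ is of order $O(\sqrt{\eps})$ in $L^2$ and vanishes in the limit. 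By Banach--Alaoglu, along a subsequence $u^\eps\rightharpoonup u$ weakly in $L^2((0,T)\times\Omega_0)$ and $f(u^\eps)\rightharpoonup^\ast F$ weak-$\ast$ in $L^\infty$; testing \eqref{Pe0} against $\phi\in C^1([0,T];C_c^\infty(\Omega_0))$ with $\phi(T,\cdot)=0$ and sending $\eps\to 0$, one obtains $u_t=F$ distributionally with $u(0,\cdot)=u_0$.

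The heart of the proof is to identify $F$ with $f(u)$, which does not follow from weak convergence alone. My strategy is to upgrade to strong convergence $u^\eps\to v$ in $L^2((0,T)\times\Omega_0)$ by direct comparison with the pointwise ODE solution $v$. Writing $w^\eps := u^\eps - v$ and $f(u^\eps) - f(v) = g^\eps w^\eps$ with $\|g^\eps\|_{L^\infty}\le\max_{[0,M]}|f'|$,
\[
w^\eps_t - \eps\,\Delta w^\eps \;=\; \eps\,\Delta v + g^\eps\,w^\eps,\qquad w^\eps(0,\cdot)=0.
\]
Multiplying by $w^\eps$, integrating over $\Omega_0$ and applying Gronwall's inequality should yield $\|w^\eps\|_{L^2((0,T)\times\Omega_0)}\to 0$, provided the inhomogeneity $\eps\,\Delta v$ is handled (e.g.\ by approximating $u_0$ in $C(\overline{\Omega})$ by a sequence of smooth data so that the corresponding smoothed ODE solutions have $L^2$ Laplacians, then passing to the limit in the approximation) and the interface boundary integral on $\Gamma$ is controlled.

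The hardest step is precisely the interface term $\eps\int_\Gamma w^\eps\,\partial_\bnu u^\eps|_{\Omega_0}\,dS$, since $\nabla u^\eps$ admits no uniform bound from the $\Omega_0$ side. My plan is to exploit the flux-matching identity $\eps\,\partial_\bnu u^\eps|_{\Omega_0} = \partial_\bnu u^\eps|_{\Omega_1}$ built into the weak formulation of \eqref{Pe0}, which rewrites the integrand purely in terms of $\Omega_1$-side quantities controlled by the uniform $H^1(\Omega_1)$ bound from Theorem~\ref{thm:existence}, the standard trace theorem, and the $H^{1/2}$--$H^{-1/2}$ duality. Once $\|u^\eps - v\|_{L^2((0,T)\times\Omega_0)}\to 0$ is established, Lipschitz continuity of $f$ on $[0,M]$ gives $f(u^\eps)\to f(v)$ in $L^2$, hence $F = f(v)$ and $u = v$; uniqueness of $v$ (Picard--Lindel\"of applied pointwise in $x$) forces the whole family, not merely a subsequence, to converge, yielding the claimed weak $L^2$ convergence.
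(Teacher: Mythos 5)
Your overall architecture (extract weak limits, show $u_t=F$ distributionally on $\Omega_0$, then identify $F$ with $f$ of the limit) matches the paper up to the identification step, but there the paper takes a completely different route: it applies Minty's monotonicity trick to $g(t,v)=e^{-ct}f(e^{ct}v)-cv$ with $c=\sup_{[0,M]}f'$, which is nonincreasing in $v$, and identifies the weak limit $\chi$ of $f(u^\eps)$ with $f(\bar u)$ on all of $\Omega$ \emph{without} ever proving strong convergence on $\Omega_0$. Your plan instead tries to prove strong convergence $u^\eps\to v$ in $L^2((0,T)\times\Omega_0)$ by an energy/Gronwall comparison with the pointwise ODE solution, and that is where the argument has a genuine gap.

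The gap is the interface term. After integrating $\eps\int_{\Omega_0}\Delta w^\eps\,w^\eps$ by parts you must control $\int_0^T\int_\Gamma w^\eps\,\bigl(\eps\,\partial_\bnu u^\eps|_{\Omega_0}\bigr)\,dS\,dt$, and the flux-matching identity converts this into $\int_0^T\int_\Gamma w^\eps\,\partial_\bnu u^\eps|_{\Omega_1}\,dS\,dt$ with \emph{no factor of $\eps$ remaining}. The proposed $H^{1/2}$--$H^{-1/2}$ duality does not close this for three reasons. First, the uniform bound $\|\nabla u^\eps\|_{L^2((0,T)\times\Omega_1)}\le C$ does not control the normal-derivative trace $\partial_\bnu u^\eps|_{\Omega_1}$ in $H^{-1/2}(\Gamma)$: that trace requires $\nabla u^\eps\in H(\mathrm{div};\Omega_1)$ with a bound on $\Delta u^\eps=u^\eps_t-f(u^\eps)$ in $L^2(\Omega_1)$, and the available estimate \eqref{2.3} only gives $u^\eps_t$ in $L^2(0,T;(H^1)^*(\Omega))$; interior parabolic regularity in $\Omega_1$ does not reach $\Gamma=\partial\Omega_1$ without already knowing the boundary condition there. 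Second, the trace of $w^\eps$ on $\Gamma$ taken from the $\Omega_0$ side costs $\|w^\eps\|_{H^1(\Omega_0)}$, which by \eqref{2.2} is only $O(\eps^{-1/2})$. Third, and most importantly, even a uniform bound on this pairing would not suffice for Gronwall --- the term must vanish or be absorbable --- and the statement that $\partial_\bnu u^\eps|_{\Omega_1}\to 0$ on $\Gamma$ is precisely the content of Theorem \ref{thm:convergence1}, so invoking it here is circular. A workable repair of your strategy is to multiply by $\mu^2 w^\eps$ with a cutoff $\mu$ supported in $\Omega_0\setminus\{\operatorname{dist}(x,\Gamma)<\rho\}$: then $D_\eps=\eps$ on $\operatorname{supp}\mu$, the commutator terms involving $\nabla\mu$ are $O(\eps)$, the interface term disappears, and Gronwall gives strong $L^2$ convergence on compact subsets of $\Omega_0$; combined with the uniform $L^\infty$ bound this yields convergence on all of $\Omega_0$ (indeed strong, which is more than the theorem claims). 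The secondary issue you already flagged --- that $\Delta v$ and $\nabla v$ require smoothing $u_0$, since $u_0$ is merely continuous --- is real but repairable by the approximation you describe, using the Lipschitz dependence of the ODE flow on its initial data.
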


The organization of this paper is as follows. In Section 2, we introduce a sequence of perturbed problems $(P_\delta^\eps)$ where the diffusion coefficient $D_{\delta,\eps}$ is a smooth approximation of $D_\eps$. We present a priori estimates which are uniform in the parameters $\delta$ and $\eps$. In Section 3, we let $\delta$ tend to zero and deduce the solution existence of Problem \eqref{Pe0}. Then, the uniqueness of the problem is proved, which gives Theorem \ref{thm:existence}. Finally, we let $\eps$ tend to zero in Section 4 and present the proofs of Theorem \ref{thm:convergence1} and Theorem \ref{thm:convergence2}.

\subsection{Heterogeneous diffusion for geometry effects}

The paper is written under the motivation of a project to reinterpret the effect of domain geometry and boundary condition using heterogeneous diffusion. For example, let $u$ be a solution of a reaction-diffusion equation
\begin{equation}\label{E1}
\begin{cases}u_t = \Delta u + f(u), & (t,x)\in(0,T)\times \Omega_1,\\
\alpha u+(1-\alpha)\bnu\cdot\nabla u = 0,& (t,x)\in (0,T)\times \partial\Omega_1,\\
u(0,x) = u_0(x),&x\in\Omega_1,
\end{cases}
\end{equation}
where $f(u)$ is a bistable nonlinearity. The parameter $\alpha=0$ gives the Neumann boundary condition, and $\alpha=1$ the Dirichlet boundary condition. The solution behavior of the problem depends on the shape of domain and the boundary condition. Consider two examples. H. Matano showed in his seminal paper that, under the Neumann boundary condition ($\alpha=0$), nonconstant steady state solutions of the problem are unstable if $\Omega_1$ is convex \cite[Theorem 5.1]{001}. However, it can be stable if $\Omega_1$ is nonconvex \cite[Theorem 6.2]{001} (see \cite{002} and references therein for more works on dumbbell-shaped domains). On the other hand, Berestycki \emph{et al.} \cite{004} took the Neumann boundary condition and showed that a bistable traveling wave may propagate or be blocked depending on the exit shape of the domain, which is independent of the domain size.

To reinterpret the geometry effect, we may embed the domain $\Omega_1$ into the whole space $\Omega=\R^n$, and assign diffusivity $\eps>0$ to the outer domain $\Omega\setminus\Omega_1$. One might say that the solution of the whole domain problem converges to the solution of the original bounded domain problem as $\eps\to0$ and gives the geometric effect of the original problem. However, Theorem \ref{thm:convergence1} implies that Problem \eqref{Pe0} converges to a problem with the homogeneous Neumann boundary condition. A more general case involves the problem
\begin{equation}\label{E2}
\begin{cases}u_t = \nabla\cdot(D_\eps^{1-q}\nabla(D_\eps^q u)) + f(u), & (t,x)\in(0,T)\times \Omega,\\
\bnu\cdot\nabla u = 0,& (t,x)\in (0,T)\times \partial\Omega,\\
u(0,x) = u_0(x),&x\in\Omega.
\end{cases}
\end{equation}
If $q=0$, \eqref{E2} is identical to the Fick's law case in Problem \eqref{Pe0}. If $q=1$, the diffusion law is called Chapman \cite{Chapman}, and if $q=0.5$, Wereide \cite{Wereide}.  Diffusion laws with a general exponent $q$ may appear depending on the choice of reference points of the spatial heterogeneity (see \cite{Alfaro,012}). The resulting boundary condition depends on $q$. For example, the case without reaction ($f=0$) has been studied in one space dimension \cite{010}. The resulting boundary condition is Neumann if $q<0.5$ and Dirichlet if $q>0.5$. However, the critical exponent $q=q^*$ may depend on the reaction function $f$, the space dimension, and the regularity of the boundary, which requires further study.

\section{Classical solution for a smooth diffusivity} \label{sec:classical}

We start by regularising the diffusivity $D_\eps$ in \eqref{1.3} which is discontinuous along the interface $\Gamma$. For small parameters $\epsilon,\delta >0$, let
\begin{equation*}
D_{\delta,\eps}(x) = \begin{cases}
1, & x \in\Omega_1\cup\Gamma \\ 
\epsilon, & x \in\Omega_0\text{ and } \operatorname{dist}(\Gamma,x)>\delta,
\end{cases}
\end{equation*}
and then extend $D_{\delta,\eps}$ smoothly to the whole domain $\Omega$ so that $\eps\le D_{\delta,\eps}\le 1$. Then, it converges to the discontinuous diffusion coefficient pointwise, i.e.,
\[
\lim_{\delta \to 0} D_{\delta,\eps}(x) = D_{\eps}(x).
\]
We consider a regularized problem
\begin{equation} \label{Ped}
\begin{cases}
u_t=\nabla\cdot(D_{\delta,\eps}(x) \nabla u )+f(u), & (t,x) \in (0,T)\times\Omega\\
\n\cdot\nabla u =0, & (t,x)\in(0,T)\times\partial\Omega\\
u(0,x) = u_0(x),&x\in\Omega.
\end{cases}
\tag{$P_\delta^\eps$}
\end{equation}
Problem \eqref{Ped} formally converges to Problem \eqref{Pe0} as $\delta\to0$. We prove this below.

\begin{definition}
A function $u^{\delta,\eps} \in C^{1,2}((0,T) \times \Omega) \cap C([0,T]\times \overline{\Omega})$ is called a classical solution of Problem \eqref{Ped} if $u^{\delta,\eps}$ and its derivatives satisfy the partial differential equation in Problem \eqref{Ped} pointwise as well the initial and boundary conditions.
\end{definition}

\begin{lemma} \label{classicalsolution}
There exists a unique classical solution $u^{\delta,\eps}$ of Problem \eqref{Ped} which is bounded by
\begin{equation}\label{2.0}
0\le u^{\delta,\eps}\le M\quad\text{in } (0,T)\times \Omega.
\end{equation}
\end{lemma}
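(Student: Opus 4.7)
The plan is to combine (i) local existence and classical regularity from standard linear parabolic theory with smooth, uniformly elliptic coefficients, (ii) an $L^\infty$ bound obtained by the parabolic comparison principle, and (iii) global extension to $[0,T]$ together with uniqueness. The point to keep in mind is that, for fixed $\delta,\eps>0$, the coefficient $D_{\delta,\eps}$ is smooth on $\overline\Omega$ and bounded between $\eps$ and $1$, so Problem \eqref{Ped} is a uniformly parabolic semilinear problem with a smooth boundary and a $C^1$ nonlinearity; no subtleties related to the $\eps\to 0$ limit enter at this stage.

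For the local existence step I would set up a Banach fixed point argument on $C([0,\tau]\times\overline\Omega)$ for a short $\tau>0$: for $v$ in a small ball around $u_0$ in this space, solve the linear Neumann problem
\[
u_t=\nabla\cdot(D_{\delta,\eps}\nabla u)+f(v),\quad u(0,\cdot)=u_0,\quad \bnu\cdot\nabla u=0\text{ on }\partial\Omega,
\]
using that the linear operator with Neumann boundary condition generates an analytic semigroup on $C(\overline\Omega)$, and define the map $\Phi:v\mapsto u$. The local Lipschitz continuity of $f$ and the smoothing of the semigroup make $\Phi$ a contraction on a suitable ball, giving a unique mild solution on some interval $[0,\tau]$ with $\tau=\tau(\|u_0\|_\infty)$. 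Standard interior and boundary Schauder estimates, applicable thanks to the smoothness of $\partial\Omega$ and of $D_{\delta,\eps}$, upgrade this mild solution to a function in $C^{1,2}((0,\tau)\times\Omega)\cap C([0,\tau]\times\overline\Omega)$ that satisfies the Neumann condition and the equation pointwise.

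To obtain \eqref{2.0} I would invoke the parabolic comparison principle with constant barriers. The bistability assumptions \eqref{1.2} and $M\ge 1$ give $f(0)=0$ and $f(M)\le 0$, so the constants $0$ and $M$ are, respectively, a sub- and a super-solution of \eqref{Ped}; they trivially meet the Neumann condition, and by \eqref{1.1} the initial datum lies between them. Since $D_{\delta,\eps}$ is smooth and $f$ is $C^1$, the standard parabolic comparison principle yields $0\le u^{\delta,\eps}\le M$ on the local existence interval.

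Finally, because the bound $M$ does not depend on $\tau$, a continuation argument extends the solution to all of $[0,T]$: at a hypothetical maximal time $T^*<T$, one restarts the fixed point construction with $u^{\delta,\eps}(T^*,\cdot)\in C(\overline\Omega)$ as new initial datum (still bounded by $M$), contradicting maximality. Uniqueness on $[0,T]$ follows from the local uniqueness iterated along a partition of $[0,T]$, or equivalently from a direct Gronwall estimate applied to the difference of two solutions and using the Lipschitz bound of $f$ on $[0,M]$. The only point requiring care is the claim that the boundary condition and PDE hold classically while only $u_0\in C(\overline\Omega)$ is assumed, and this is handled by the parabolic smoothing for $t>0$ together with continuity of the semigroup at $t=0$; no essential obstacle arises here, with the truly delicate estimates being those that must be \emph{uniform} in $\delta$ and $\eps$, which are taken up in the remainder of the paper.
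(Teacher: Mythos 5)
Your argument is correct, and it reaches the same two conclusions (existence/uniqueness of a classical solution, plus the bound \eqref{2.0} via the constant barriers $0$ and $M$) by a somewhat different route than the paper. The paper's proof is essentially two lines: it observes, exactly as you do, that $f(0)=0$ and $f(M)\le 0$ (using \eqref{1.2} and $M\ge 1$) make $u=0$ a lower solution and $u=M$ an upper solution, notes that $f$ is Lipschitz on $[0,M]$, and then outsources existence and uniqueness entirely to \cite[Theorem 4.2]{Pao}, which is based on the method of upper and lower solutions with monotone iteration between the two constant barriers. You instead construct the solution by a contraction argument using the analytic semigroup generated by $\nabla\cdot(D_{\delta,\eps}\nabla\,\cdot)$ with Neumann condition on $C(\overline\Omega)$ (in the spirit of \cite{Lunardi}), upgrade to classical regularity by Schauder estimates, and then globalize by continuation using the a priori bound. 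Your approach is more self-contained and makes explicit where each hypothesis enters (uniform ellipticity for fixed $\delta,\eps$, smoothness of $D_{\delta,\eps}$ and $\partial\Omega$, $C^1$ regularity of $f$); the paper's approach is shorter and gets global existence in one step since the monotone iteration is squeezed between the two time-global barriers, with no continuation argument needed. One small point in your write-up worth making explicit: to apply Schauder you need the right-hand side $f(u)$ to be H\"older continuous, which requires first using parabolic smoothing to show the fixed point $u$ is H\"older for $t>0$ before bootstrapping to $C^{1,2}$; you gesture at this but it is the one place where the order of operations matters. Neither approach has a gap, and you are right that the genuinely delicate content of the paper lies in the estimates uniform in $\delta$ and $\eps$, not here.
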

\begin{proof}
	The inequalities in \eqref{2.0} follow from the standard maximum principle and the initial condition \eqref{1.1}. Since $f(u) \le 0$ when $u = 0 , M$, $u=0$ is a lower solution and $u=M$ is a upper solution of \eqref{Ped}. 
	
	Now, the reaction function $f(u)$ is Lipschitz since it is a continuous function for bounded domain $0 \le u \le M$. We apply \cite[Theorem 4.2]{Pao} to deduce the existence of unique classical solution, and it completes the proof.
\end{proof}

Next, we obtain a priori estimates for the solution $u^{\delta,\eps}$.

\begin{lemma} \label{estimation1}
Let $\eps\in(0,1)$ and let $u^{\delta,\eps}$ be the classical solution of Problem \eqref{Ped} and define $M_f:= \sup_{s\in[0,M]}|f(s)|$. Then, we have the following inequalities.
\begin{eqnarray}
\label{2.1}
&&\|\nabla u^{\delta,\eps}\|_{L^2((0,T)\times\Omega_1)}\ \,\le C_1(T,|\Omega|,n,M,M_f),
\\
 \label{2.2}
&&\|\nabla u^{\delta,\eps}\|_{L^2((0,T)\times\Omega_0)}\, \le  \frac{1}{\sqrt{\eps}}C_2(T,|\Omega|,n,M,M_f) ,
\\ 
 \label{2.3}
&&\|u^{\delta,\eps}_t\|_{L^2(0,T;(H^{1})^*(\Omega))} \le C_3(T,|\Omega|,n,M,M_f).
\end{eqnarray}
\end{lemma}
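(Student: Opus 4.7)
\smallskip
\noindent\textbf{Proof plan.} All three estimates descend from a single weighted energy identity. I would multiply the equation in \eqref{Ped} by $u^{\delta,\eps}$, integrate over $\Omega$, and use the homogeneous Neumann boundary condition $\n\cdot\nabla u^{\delta,\eps}=0$ on $\partial\Omega$ to obtain
\[
\tfrac{1}{2}\tfrac{d}{dt}\int_\Omega (u^{\delta,\eps})^2\,dx = -\int_\Omega D_{\delta,\eps}|\nabla u^{\delta,\eps}|^2\,dx + \int_\Omega f(u^{\delta,\eps})\,u^{\delta,\eps}\,dx.
\]
Since $0\le u^{\delta,\eps}\le M$ by Lemma \ref{classicalsolution}, the reaction term is bounded pointwise in $t$ by $MM_f|\Omega|$. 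Integrating in time from $0$ to $T$ and using $0\le u_0\le M$ yields the master bound
\[
\int_0^T\!\!\int_\Omega D_{\delta,\eps}|\nabla u^{\delta,\eps}|^2\,dx\,dt \le \tfrac{1}{2}M^2|\Omega| + TMM_f|\Omega| =: K,
\]
with $K$ independent of $\delta$ and $\eps$.

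\smallskip
\noindent\textbf{Derivation of the three estimates.} Because $D_{\delta,\eps}\equiv 1$ on $\Omega_1$ regardless of $\delta$ and $\eps$, restricting the master bound to $\Omega_1$ gives \eqref{2.1} immediately. Since the smooth extension satisfies $D_{\delta,\eps}\ge\eps$ everywhere, restricting to $\Omega_0$ and dividing by $\eps$ gives $\|\nabla u^{\delta,\eps}\|_{L^2((0,T)\times\Omega_0)}^2\le K/\eps$, which is \eqref{2.2}. For \eqref{2.3}, I would test the equation against an arbitrary $\phi\in L^2(0,T;H^1(\Omega))$:
\[
\int_0^T\langle u^{\delta,\eps}_t,\phi\rangle\,dt = -\int_0^T\!\!\int_\Omega D_{\delta,\eps}\nabla u^{\delta,\eps}\cdot\nabla\phi\,dx\,dt + \int_0^T\!\!\int_\Omega f(u^{\delta,\eps})\,\phi\,dx\,dt.
\]
Applying the weighted Cauchy--Schwarz inequality and using the upper bound $D_{\delta,\eps}\le 1$ on the $\phi$-factor bounds the diffusion contribution by $\sqrt{K}\,\|\phi\|_{L^2(0,T;H^1)}$, while $|f(u^{\delta,\eps})|\le M_f$ bounds the reaction contribution by $M_fT^{1/2}|\Omega|^{1/2}\|\phi\|_{L^2(0,T;L^2)}$. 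Taking the supremum over $\phi$ in the unit ball of $L^2(0,T;H^1(\Omega))$ yields \eqref{2.3}.

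\smallskip
\noindent\textbf{Main obstacle.} None of these steps is technically deep; the only thing that requires care is keeping every constant independent of $\delta$ and $\eps$. This is secured by two structural features of the weight that must be used at the right places: $D_{\delta,\eps}=1$ \emph{exactly} on $\Omega_1$, which is what makes the weighted gradient integral an honest control of $\|\nabla u^{\delta,\eps}\|_{L^2(\Omega_1)}$; and $D_{\delta,\eps}\le 1$ everywhere on $\Omega$, which is what prevents any factor $\eps^{-1}$ from appearing in the dual bound on $u^{\delta,\eps}_t$. The $1/\sqrt{\eps}$ blow-up in \eqref{2.2} is genuine and reflects that the diffusion in $\Omega_0$ degenerates as $\eps\to0$.
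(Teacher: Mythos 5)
Your proposal is correct and follows essentially the same route as the paper: the same energy identity obtained by testing with $u^{\delta,\eps}$, the same master bound on $\int_0^T\!\int_\Omega D_{\delta,\eps}|\nabla u^{\delta,\eps}|^2$, and the same duality argument for \eqref{2.3}. The only cosmetic difference is that for the dual estimate you apply a weighted Cauchy--Schwarz inequality using $D_{\delta,\eps}\le 1$ globally, whereas the paper splits the integral over $\Omega_1$ and $\Omega_0$ and uses \eqref{2.1} together with $\eps\cdot\eps^{-1/2}C_2=\sqrt{\eps}\,C_2$; both yield a bound independent of $\delta$ and $\eps$.
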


\begin{proof}
We first prove \eqref{2.1} and \eqref{2.2}. We multiply the reaction-diffusion equation in Problem \eqref{Ped} by $u^{\delta,\eps}$ and integrate the result on $\Omega$. This gives
\[
\int_{\Omega} u^{\delta,\eps} u^{\delta,\eps}_t dx = \int_{\Omega} \nabla\cdot(D_{\delta,\eps} \nabla u^{\delta,\eps} ) u^{\delta,\eps} dx + \int_{\Omega} f(u^{\delta,\eps}) u^{\delta,\eps} dx.
\]
Integrating by parts gives
\[
{1\over 2} {d \over dt} \int_{ \Omega} (u^{\delta,\eps})^2 dx = - \int_{ \Omega} D_{\delta,\eps} |\nabla u^{\delta,\eps}|^2 dx  + \int_{ \Omega} f(u^{\delta,\eps}) u^{\delta,\eps} dx.
\]
Integrating the above equation on $[0,T]$ yields 
\begin{eqnarray*}
\ds{1\over 2} \int_{ \Omega} (u^{\delta,\eps}(T,x))^2 \ dx + \int_0^T \int_{ \Omega} D_{\delta,\eps} |\nabla u^{\delta,\eps}|^2   dx dt \\ 
\ds\hskip 20mm = {1\over 2} \int_{ \Omega} (u_0)^2  dx + \int_0^T \int_{ \Omega} f(u^{\delta,\eps}) u^{\delta,\eps}  dx dt.
\end{eqnarray*}
Since the first term on the left-hand-side is positive, we have
\begin{equation*}
\int_0^T \int_{ \Omega} D_{\delta,\eps} |\nabla u^{\delta,\eps}|^2   dx dt \le \frac{1}{2}\int_{ \Omega} (u_0)^2  dx + \int_0^T \int_{ \Omega} f(u^{\delta,\eps}) u^{\delta,\eps}  dx dt.
\end{equation*}
The bounds $0\le u^{\delta,\eps}\le M$ and $|f(u^{\delta,\eps})|\le M_f$ imply that
\begin{equation*}
\int_0^T \int_{ \Omega} D_{\delta,\eps} |\nabla u^{\delta,\eps}|^2  dx dt \le \frac{1}{2}|\Omega|M^2 + MM_f|\Omega|T.
\end{equation*}
Note that $D_{\delta,\eps} = 1$ in $\Omega_1$ and $\eps \le D_{\delta,\eps} \le 1$  in $ \Omega$, so that we have

\[
\int_0^T \int_{\Omega_1} |\nabla u^{\delta,\eps}|^2  dx dt \le \frac{1}{2}|\Omega|M^2 + MM_f|\Omega|T,
\]
and
\[
\int_0^T \int_{\Omega_0} |\nabla u^{\delta,\eps}|^2  dx dt \le \frac{1}{\eps}(\frac{1}{2}|\Omega|M^2 + MM_f|\Omega|T).
\]
Next we prove \eqref{2.3}. Let $\phi \in L^2(0,T;H^1( \Omega))$ be a test function. We multiply the partial differential equation in Problem \eqref{Ped} by $\phi$ and integrate on $(0,T)\times \Omega$ to obtain 
\[
\int_0^T \langle u^{\delta,\eps}_t , \phi\rangle dt= \int_0^T\int_{ \Omega} \nabla\cdot(D_{\delta,\eps} \nabla u^{\delta,\eps} ) \phi  dx dt+ \int_0^T \int_{ \Omega} f(u^{\delta,\eps}) \phi  dx dt,
\]
where $\langle\cdot,\cdot\rangle$ denotes the duality product between $H^1( \Omega)$ and $(H^1)^*( \Omega)$.
Applying integration by parts, we obtain
\[
\int_0^T \langle u^{\delta,\eps}_t,\phi\rangle dt = -\int_0^T \int_{ \Omega} D_{\delta,\eps}  \nabla u^{\delta,\eps} \cdot\nabla \phi dx dt + \int_0^T\int_{ \Omega} f(u^{\delta,\eps}) \phi  dx dt.
\]
The terms on the right hand side are bounded as follows.
\begin{align*}
	&\left|\int_0^T\int_{ \Omega} D_{\delta,\eps}  \nabla u^{\delta,\eps} \cdot\nabla \phi dx dt\right|\\
	\le& \left|\int_0^T\int_{ \Omega_1} \nabla u^{\delta,\eps}\cdot \nabla \phi dx dt\right|+\left|\int_0^T\int_{ \Omega_0} \eps \nabla u^{\delta,\eps}\cdot\nabla \phi dx dt\right|\\
	\le&  \Big(\int_0^T\int_{ \Omega_1} |\nabla u^{\delta,\eps}|^2 dx dt\Big)^{1\over 2}\Big(\int_0^T\int_{ \Omega_1} |\nabla \phi|^2 dx dt\Big)^{1\over 2} \\
	&+ \eps\Big(\int_0^T\int_{ \Omega_0} |\nabla u^{\delta,\eps}|^2 dx dt\Big)^{1\over 2}\Big(\int_0^T\int_{ \Omega_0} |\nabla \phi|^2 dx dt\Big)^{1\over 2}\\
	\le& (C_1+\sqrt{\eps}C_2) \Big(\int_0^T\int_{ \Omega} |\nabla \phi|^2 dx dt\Big)^{1\over 2},
\end{align*}
\[
\left|\int_0^T\int_{ \Omega} f(u^{\delta,\eps}) \phi  dx dt\right|\le M_f \sqrt{|\Omega|T} \Big(\int_0^T\int_{ \Omega} | \phi|^2   dx dt\Big)^{1\over 2}.
\]
We deduce that
\[
\Big|\int_0^T \langle u^{\delta,\eps}_t, \phi\rangle dt\Big| \le \Big(C_1+\sqrt{\eps}C_2 + M_f\sqrt{|\Omega|T}\Big)\|\phi\|_{L^2(0,T;H^1( \Omega))},
\]
which implies that
\begin{equation*}
\|  u^{\delta,\eps}_t\|_{L^2(0,T;(H^{1})^*( \Omega))} \le C_1+\sqrt{\eps}C_2 + M_f\sqrt{|\Omega|T}\le C_3(T,|\Omega|,n,M,M_f).
\end{equation*}
This completes the proof of \eqref{2.3}.
\end{proof}

\section{Existence and uniqueness of a weak solution of \eqref{Pe0}}

In this section, we prove that the solution of Problem \eqref{Pe0} exists by taking the singular limit as $\delta \to 0$ in Problem \eqref{Ped}. The solution of Problem \eqref{Pe0} is defined in a weak sense.

\begin{definition}\label{def:weak}
A function $u\in C([0,T];L^2( \Omega)) \cap L^2((0,T);H^1( \Omega)) \cap $\\ $L^\infty((0,T) \times  \Omega)$ is called a weak solution of Problem \eqref{Pe0} if
\begin{eqnarray}\label{eq:weak}
&\ds\int_0^T \int_{ \Omega} (-u \phi_t + D_\eps(x) \nabla u \cdot \nabla \phi - f(u) \phi) dx dt\nonumber\\
&\ds = \int_{ \Omega} u_0(x) \phi(0,x) - u(T,x)\phi(T,x)dx,
\end{eqnarray}
for all test functions $\phi\in H^1((0,T)\times \Omega)$.

\end{definition}

In order to show the existence of a weak solution of Problem \eqref{Pe0}, we consider the solution $u^{\delta,\eps}$ of Problem \eqref{Ped} and apply Fr\'echet-Kolmogorov theorem which is introduced in \cite[Proposition 2.5]{Crooks}, \cite[Theorem 4.26, p.111, Corollary 4.27, p.113]{Brezis}. In the following proposition, $Q_T:= (0,T)\times \Omega$ and $\bar{\eps}$ and $\bar{\delta}$ are small constants which are not related to $\eps$ and $\delta$ in Problem \eqref{Ped}.
\begin{proposition}[Fr\'echet-Kolmogorov] \label{frechet}
A bounded set $B\subset L^2(Q_T)$ is precompact in $L^2(Q_T)$ if
\begin{enumerate}
\item For any $\bar{\eps}>0$ and any subset $Q \Subset Q_T$, there exists a $\bar{\delta}>0$ such that $\bar{\delta}<\operatorname{dist}(Q,\partial Q_T)$ and
\[
\|u(t+\tau,x)-u(t,x)\|_{L^2(Q)} + \|u(t,x+\xi)-u(t,x)\|_{L^2(Q)} < \bar{\eps}
\]
for all $\tau, \xi,$ and $u \in B$ whenever $|\tau|+|\xi| < \bar{\delta}$.

\item For any $\bar{\eps}>0$, there exists $Q  \Subset Q_T$ such that $$\|u\|_{L^2(Q_T \backslash Q)} < \bar{\eps}$$ for all $u \in B$.
\end{enumerate}
\end{proposition}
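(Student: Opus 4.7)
The plan is to establish precompactness of $B$ in $L^2(Q_T)$ by showing that $B$ is totally bounded: given any $\bar{\eps} > 0$, I will construct a finite $\bar{\eps}$-net. The standard route couples mollification with the Arzelà-Ascoli theorem. The tightness condition (2) will let me truncate to a compact subset $Q \Subset Q_T$, and the translation equicontinuity condition (1) will ensure that mollified versions of $u$ approximate $u$ uniformly on $B$ in $L^2(Q)$.

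First, fix $\bar{\eps} > 0$ and use condition (2) to choose $Q \Subset Q_T$ with $\|u\|_{L^2(Q_T \setminus Q)} < \bar{\eps}/4$ for every $u \in B$, so that it suffices to build a finite $\bar{\eps}/2$-net for the restricted family $\{u|_Q : u \in B\}$ in $L^2(Q)$. Next, introduce a standard mollifier $\rho_h \in C_c^\infty(\R^{n+1})$ with $\int \rho_h = 1$ and $\operatorname{supp}\rho_h \subset B(0,h)$; extend each $u \in B$ by zero outside $Q_T$ and set $u_h := \rho_h * u$. A direct application of Jensen's inequality and Fubini yields
\[
\|u_h - u\|_{L^2(Q)}^2 \le \sup_{|z| \le h}\|u(\cdot + z) - u(\cdot)\|_{L^2(Q)}^2,
\]
so hypothesis (1), invoked with a $\bar{\delta} < \operatorname{dist}(Q,\partial Q_T)$, allows me to pick $h$ small enough that $\|u_h - u\|_{L^2(Q)} < \bar{\eps}/4$ uniformly in $u \in B$.

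With $h$ now frozen, the family $B_h := \{u_h|_{\overline{Q}} : u \in B\}$ is uniformly bounded in $L^\infty(\overline{Q})$ and uniformly equicontinuous, since Cauchy-Schwarz gives $|u_h(p) - u_h(q)| \le \|u\|_{L^2(Q_T)}\,\|\rho_h(p-\cdot) - \rho_h(q-\cdot)\|_{L^2}$ and $\rho_h$ is smooth with bounded derivatives. By the Arzelà-Ascoli theorem, $B_h$ is precompact in $C(\overline{Q})$ and hence totally bounded in $L^2(Q)$; let $\{v_1,\dots,v_N\}$ be a finite $\bar{\eps}/4$-net. The triangle inequality combined with the mollification estimate and the tightness truncation then shows that $\{v_1,\dots,v_N\}$, viewed as elements of $L^2(Q_T)$ by zero extension, is a finite $\bar{\eps}$-net for $B$, proving total boundedness and hence precompactness.

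The main technical obstacle is the bookkeeping that ensures compatibility between the zero extension, the choice of the compact set $Q$, and the mollification radius $h$. Translating or mollifying near $\partial Q_T$ can drag a function outside the region where hypothesis (1) controls its translates, and the safety margin $\bar{\delta} < \operatorname{dist}(Q,\partial Q_T)$ built into (1) is exactly what permits the displayed estimate to be applied without boundary losses. Once that margin is respected, the remainder of the argument is a standard total-boundedness manipulation in $L^2(Q_T)$.
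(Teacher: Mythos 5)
The paper does not actually prove this proposition: it is quoted as a known compactness criterion and justified by citation to Crooks et al.\ and to Brezis (Theorem 4.26 and Corollary 4.27), so there is no in-paper argument to compare yours against. Your proof is the standard one for the Riesz--Fr\'echet--Kolmogorov theorem --- tightness to truncate to $Q\Subset Q_T$, mollification controlled by the translation estimate, Arzel\`a--Ascoli on the mollified family, and a three-term triangle inequality to produce a finite $\bar\eps$-net --- and it is essentially correct; it is in fact the proof found in the cited references. One point deserves to be made explicit rather than swept into ``bookkeeping'': hypothesis (1) as stated controls only pure time translates $u(t+\tau,x)-u(t,x)$ and pure space translates $u(t,x+\xi)-u(t,x)$ separately, whereas your mollification estimate requires the full space-time translate $u(\cdot+z)-u(\cdot)$ with $z=(\tau,\xi)$. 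You need to split
\[
u(t+\tau,x+\xi)-u(t,x)=\bigl(u(t+\tau,x+\xi)-u(t,x+\xi)\bigr)+\bigl(u(t,x+\xi)-u(t,x)\bigr),
\]
and the first bracket, measured in $L^2(Q)$, is a pure time translate measured on the shifted set $Q+(0,\xi)$; since hypothesis (1) is assumed for \emph{every} $Q\Subset Q_T$, you apply it to a slightly enlarged $Q'$ with $Q\Subset Q'\Subset Q_T$ and $|\xi|<\operatorname{dist}(Q,\partial Q')$. With that one line added, the argument is complete, and it also matches how the hypotheses are actually verified later in the paper (the translate estimates in Lemmas 3.3 and 3.4 are proved separately in space and in time).
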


Next, we apply the Fr\'echet-Kolmogorov theorem to the collection of classical solutions $\{u^{\delta,\eps}\}$ of \eqref{Ped} and show that it is precompact in $L^2(Q_T)$. We recall that the constant $\eps>0$ is fixed through this section. Since the classical solutions $\{u^{\delta,\eps}\}$ are uniformly bounded, the second assertion of Proposition \ref{frechet} can be easily derived. Indeed, let $Q = (0,T-\tau) \times \Omega^r$ for some small $\tau, r >0$, where $\Omega^r =\{x\in \Omega:\operatorname{dist}(x,\partial\Omega) > r \}$. Then,
\[
\|u^{\delta,\eps}\|_{L^2( Q_T \backslash Q)}^2 \le \int_{T-\tau}^T \int_{ \Omega} (u^{\delta,\eps})^2   dx dt + \int^{T}_0 \int_{ \Omega\backslash \Omega^r} (u^{\delta,\eps})^2   dx dt.
\]
From Lemma 2.3, we have 
\begin{equation}\label{ineq1}
\int_{T-\tau}^T \int_{ \Omega} (u^{\delta,\eps})^2   dx dt \le \tau|\Omega| M^2, 
\end{equation} and 
\begin{equation}\label{ineq2}
\int^{T}_0 \int_{ \Omega\backslash \Omega^r} (u^{\delta,\eps})^2   dx dt \le |\Omega\backslash\Omega_r|TM^2.
\end{equation}
Note that the right-hand sides of the inequalities \eqref{ineq1} and \eqref{ineq2} tend to zero as $\tau \to 0$ and $r \to 0$. Thus for any $\bar{\eps}>0$, we may choose $\tau>0$ and $r>0$ so small that 
\[
\|u^{\delta,\eps}\|_{L^2( Q_T \backslash Q)}^2 \le \tau |\Omega| M^2 + |\Omega\backslash \Omega^r|TM^2 < \bar{\eps}.
\]
This completes the proof of the property (2) in the Fr\'echet-Kolmogorov theorem.

We now estimate the equicontinuity of time and space variable for $\{u^{\delta,\eps}\}$ (see \cite[Lemmas 2.6 and 2.7]{Crooks}). 
\begin{lemma} For any small positive constant $r>0$,
\begin{enumerate}
\item There exists a positive constant $C_\eps$ which is independent of $\delta$ such that
\begin{equation}\label{3.4}
\int_0^{T} \int_{ \Omega^r} (u^{\delta,\eps}(t,x+\xi)-u^{\delta,\eps}(t,x))^2 dx dt \le C_\eps |\xi|^2  ,
\end{equation}
for all real values $|\xi| \le r$.
\item There exists a positive constant $C$ which is independent of $\delta$ and $\eps$ such that
\begin{equation} \label{3.5}
\int_0^{T} \int_{\Omega_1^r} (u^{\delta,\eps}(t,x+\xi)-u^{\delta,\eps}(t,x))^2 dx dt \le C |\xi|^2  ,
\end{equation}
for all real values $|\xi| \le r$.
\end{enumerate}
 
\end{lemma}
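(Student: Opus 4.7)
The plan is to reduce both estimates to the elementary identity
\[
u^{\delta,\eps}(t,x+\xi) - u^{\delta,\eps}(t,x) = \int_0^1 \nabla u^{\delta,\eps}(t,x+s\xi)\cdot\xi\, ds,
\]
which is legitimate because $u^{\delta,\eps}$ is a classical solution. Squaring and applying Cauchy--Schwarz on the $s$-integral yields
\[
|u^{\delta,\eps}(t,x+\xi)-u^{\delta,\eps}(t,x)|^2 \le |\xi|^2 \int_0^1 |\nabla u^{\delta,\eps}(t,x+s\xi)|^2\, ds,
\]
so the whole lemma becomes a statement about translated $L^2$ norms of $\nabla u^{\delta,\eps}$.

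I would then integrate in $x$ over the relevant inner set, integrate in $t$ over $(0,T)$, swap the order of integration by Fubini, and make the change of variables $y=x+s\xi$ under the spatial integral. The geometric role of the buffer $r$ enters here: if $x\in\Omega^r$ and $|\xi|\le r$, the whole segment $\{x+s\xi:s\in[0,1]\}$ remains inside $\Omega$; similarly, if $x\in\Omega_1^r$ and $|\xi|\le r$, the segment stays inside $\Omega_1$. This is the only nontrivial point to verify, and it is what makes the translated integral of $|\nabla u^{\delta,\eps}|^2$ controllable by an integral over $\Omega$ (respectively $\Omega_1$) with no translated boundary issue.

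For assertion (1), I apply the argument globally and use the combined bound $\|\nabla u^{\delta,\eps}\|_{L^2((0,T)\times\Omega)}^2 \le C_1^2 + \eps^{-1}C_2^2$ obtained by summing \eqref{2.1} and \eqref{2.2}, which produces a constant $C_\eps$ depending on $\eps$ but not on $\delta$. For assertion (2), the segment containment lets me replace the global integral by one over $\Omega_1$ only, so the bound uses solely \eqref{2.1}; this yields a constant $C$ independent of both $\delta$ and $\eps$, which is the content of the second inequality.

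Since both estimates follow once the segment-containment observation is made, there is no serious obstacle: the only thing to be careful about is the elementary geometry of $\Omega^r$ and $\Omega_1^r$ and the fact that the constants $C_1, C_2$ from Lemma \ref{estimation1} are themselves independent of $\delta$, so that the resulting $C_\eps$ and $C$ inherit that independence.
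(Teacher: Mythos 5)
Your proposal is correct and follows essentially the same route as the paper: write the difference as a line integral of the gradient, apply Cauchy--Schwarz and Fubini with the change of variables $y=x+s\xi$, use the buffer $r$ to keep the translated segment inside $\Omega$ (resp.\ $\Omega_1$), and invoke \eqref{2.1}--\eqref{2.2} for assertion (1) and \eqref{2.1} alone for assertion (2). Your explicit justification of the segment containment is slightly more careful than the paper's write-up, and you correctly cite \eqref{2.1} for the second estimate where the paper's text mistakenly refers to \eqref{2.2}.
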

\begin{proof}
We first prove \eqref{3.4}. 
\begin{eqnarray*}
&&\int_0^{T}\int_{\Omega^r} (u^{\delta,\eps}(t,x+\xi)-u^{\delta,\eps}(t,x))^2 dx dt \\ 
&&\qquad= \int_0^{T} \int_{\Omega^r} \Big(\int_0^1 \nabla u^{\delta,\eps}(t,x+\theta \xi)\cdot \xi d\theta \Big)^2   dx dt \\ 
&&\qquad\le |\xi|^2\int_0^1 \int_0^{T} \int_{\Omega^r}  | \nabla u^{\delta,\eps}(t,x+\theta \xi)|^2   dx dt d\theta \\
&&\qquad\le |\xi|^2 \int_0^1 \int_0^T \int_{\Omega}  | \nabla u^{\delta,\eps}(t,x)|^2   dx dt d\theta =  |\xi|^2 \| \nabla u^{\delta,\eps} \|_{L^2(Q_T)}^2
\end{eqnarray*}
Thus we deduce from \eqref{2.1} that
\[
\int_0^{T} \int_{ \Omega} (u^{\delta,\eps}(t,x+\xi)-u^{\delta,\eps}(t,x))^2   dx dt \le (C_1^2+C_{2,\eps}^2) |\xi|^2,
\]
where $C_1$ is the upper bound in \eqref{2.1} and 
\begin{equation}\label{3.6}
C_{2,\eps}=\displaystyle{\frac{1}{\sqrt{\eps}}}C_2,
\end{equation}
 where $C_2$ is the upper bound in \eqref{2.2} which are  independent of $\delta$.

Next we prove the inequality \eqref{3.5}.
\begin{eqnarray*}
&& \int_0^{T} \int_{\Omega_1^r} (u^{\delta,\eps}(t,x+\xi)-u^{\delta,\eps}(t,x))^2 dx dt \\ 
&&\qquad= \int_0^{T} \int_{\Omega_1^r} \Big(\int_0^1 \nabla u^{\delta,\eps}(t,x+\theta \xi)\cdot \xi d\theta \Big)^2   dx dt \\ 
&&\qquad\le |\xi|^2\int_0^1 \int_0^{T} \int_{\Omega_1^r}  | \nabla u^{\delta,\eps}(t,x+\theta \xi)|^2   dx dt d\theta \\
&&\qquad\le |\xi|^2 \int_0^1 \int_0^T \int_{\Omega_1}  | \nabla u^{\delta,\eps}(t,x)|^2   dx dt d\theta =  |\xi|^2 \| \nabla u^{\delta,\eps} \|_{L^2((0,T)\times \Omega_1)}^2,
\end{eqnarray*}
which in view of the inequality \eqref{2.2} implies that
\[
\int_0^{T} \int_{\Omega_1^r} (u^{\delta,\eps}(t,x+\xi)-u^{\delta,\eps}(t,x))^2   dx dt \le C_1^2 |\xi|^2,
\]
where $C_1$ is independent of $\delta$ and $\eps$.
\end{proof}

\begin{lemma}
For any small positive constant $r>0$,
\begin{enumerate}
\item There exists a positive constant $C_\eps$ which is independent of $\delta$ such that
\[
\int_r^{T-r} \int_{ \Omega} (u^{\delta,\eps}(t+\tau,x)-u^{\delta,\eps}(t,x))^2   dx dt \le C_\eps \tau  ,
\]
for all real values $\tau$ with $0<\tau \le r < T$.
\item There exists a positive constant $C$ which is independent of $\delta$ and $\eps$ such that
\begin{equation} \label{time trans omega1}
\int_r^{T-r} \int_{\Omega_1^r} (u^{\delta,\eps}(t+\tau,x)-u^{\delta,\eps}(t,x))^2   dx dt \le C \tau  ,
\end{equation}
for all real values $\tau$ with $0<\tau \le r < T$.
\end{enumerate}

\end{lemma}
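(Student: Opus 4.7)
The plan is the classical time--translation trick, combined for part~(2) with a spatial cut-off that confines the duality pairing to $\Omega_1$, where $D_{\delta,\eps}\equiv 1$. Set $w(t,x):=u^{\delta,\eps}(t+\tau,x)-u^{\delta,\eps}(t,x)$ and observe that $w(t,x)=\int_t^{t+\tau}\partial_s u^{\delta,\eps}(s,x)\,ds$, so
\[
\int_\Omega w(t,x)\,\phi(x)\,dx=\int_t^{t+\tau}\langle\partial_s u^{\delta,\eps}(s),\phi\rangle\,ds
\]
for any $\phi\in H^1(\Omega)$.

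For part~(1), I take $\phi=w(t,\cdot)\in H^1(\Omega)$ and integrate in $t$ over $(r,T-r)$. Using the weak form of the PDE in \eqref{Ped}, the duality pairing reduces to a spatial gradient pairing plus a reaction term. Cauchy--Schwarz in $(s,x)$ followed by a Fubini swap (whose measure factor $|\{t:t\le s\le t+\tau\}|\le\tau$ produces the extra $\tau$) bounds everything by $\|u_t^{\delta,\eps}\|_{L^2(0,T;(H^{1})^*(\Omega))}\le C_3$ from \eqref{2.3} times $\|u^{\delta,\eps}\|_{L^2(0,T;H^1(\Omega))}$, which by \eqref{2.1}--\eqref{2.2} is $O(1/\sqrt{\eps})$. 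This yields $C_\eps\tau$ with $C_\eps$ independent of $\delta$.

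For part~(2), I choose $\zeta\in C_c^\infty(\Omega_1)$ with $\zeta\equiv 1$ on $\Omega_1^r$ and $\|\nabla\zeta\|_{L^\infty}\le C/r$, and take $\phi=\zeta^2 w(t,\cdot)$. Since $\zeta^2\ge 0$ and $\zeta\equiv 1$ on $\Omega_1^r$,
\[
\int_{\Omega_1^r}w^2\,dx\le\int_\Omega\zeta^2 w^2\,dx=\int_t^{t+\tau}\langle\partial_s u^{\delta,\eps}(s),\zeta^2 w(t,\cdot)\rangle\,ds.
\]
Because $\operatorname{supp}(\zeta)\subset\Omega_1$ and $D_{\delta,\eps}\equiv 1$ on $\Omega_1$, the weak form of the PDE reduces the inner pairing to integrals over $\Omega_1$ involving $\nabla u^{\delta,\eps}(s)$ tested against $\nabla(\zeta^2 w)=2\zeta\nabla\zeta\,w+\zeta^2\nabla w$, plus a reaction term. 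Applying Cauchy--Schwarz and the same Fubini swap as in part~(1), and using \eqref{2.1} to control $\|\nabla u^{\delta,\eps}\|_{L^2((0,T)\times\Omega_1)}$ and $\|\nabla w\|_{L^2((0,T)\times\Omega_1)}$ uniformly in $\eps,\delta$, gives $C\tau$ with $C$ depending only on $r,T,|\Omega_1|,M,M_f$.

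The main obstacle is precisely the $\eps$-uniformity in part~(2). The naive duality pairing over all of $\Omega$ forces $\|\nabla u^{\delta,\eps}\|_{L^2((0,T)\times\Omega)}$ into the estimate, which carries the bad $1/\sqrt{\eps}$ factor from \eqref{2.2}. The cut-off $\zeta^2 w$ sidesteps this by confining every appearance of $\nabla u^{\delta,\eps}$ to $\Omega_1$; the structural fact $D_{\delta,\eps}\equiv 1$ on $\Omega_1$ is what makes the integration by parts yield a clean estimate free of $\eps$-factors, at the price of picking up an $r$-dependent constant through $\|\nabla\zeta\|_{L^\infty}$.
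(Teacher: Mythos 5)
Your proposal is correct and follows essentially the same route as the paper: the time-translation identity $w=\int_t^{t+\tau}\partial_s u^{\delta,\eps}\,ds$ paired against $w$ itself for part (1), and against a cut-off localizing to $\Omega_1$ (where $D_{\delta,\eps}\equiv1$, so \eqref{2.1} applies) for part (2). The only cosmetic difference is that you unwind the duality pairing by substituting the weak form of \eqref{Ped}, whereas the paper bounds it abstractly via the already-established estimate \eqref{2.3} on $\|u_t^{\delta,\eps}\|_{L^2(0,T;(H^1)^*(\Omega))}$ together with the $H^1$ norm of the cut-off test function; both yield the same $\eps$-uniform constant.
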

\begin{proof} 
We have
\begin{eqnarray*}
	&&\int_r^{T-r} \int_{\Omega} (u^{\delta,\eps}(t+\tau,x)-u^{\delta,\eps}(t,x))^2   dx dt \\ 
	&=& \int_r^{T-r} \int_{\Omega} (u^{\delta,\eps}(t+\tau,x)-u^{\delta,\eps}(t,x)) \Big(\int_0^\tau u^{\delta,\eps}_t(t+s,x) ds \Big)   dx dt.\\
	&\le& \left|\int_0^\tau \int_r^{T-r}\langle u_t^{\delta,\eps}(t+s,x),u^{\delta,\eps}(t+\tau,x)\rangle  dt ds\right|\\
	&&\quad+\left|\int_0^\tau \int_r^{T-r}\langle u_t^{\delta,\eps}(t+s,x),u^{\delta,\eps}(t,x)\rangle dt ds\right|\\
	&\le& 2\tau \|u_t^{\delta,\eps}\|_{L^2(0,T;(H^1)^*(\Omega))}\|u^{\delta,\eps}\|_{L^2(0,T;H^1(\Omega))}\\
	&\le& 2\tau C_3\sqrt{|\Omega|TM^2 + C_1^2+C^2_{2,\eps}},
\end{eqnarray*}
where $C_1$, $C_{2,\eps}$, $C_3$ are the upper bounds in \eqref{2.1}, \eqref{3.6}, \eqref{2.3}, respectively, which are independent of $\delta$.
%
%

Next we prove \eqref{time trans omega1}. Let $\mu(x)\in C_c^\infty(\Omega_1)$ be such that $0\le\mu\le1$ in $\Omega_1$ and $\mu=1$ on $\Omega_1^r$. We extend $\mu$ by $0$ on $\Omega_0$. Then we have
\begin{eqnarray*}
	&&\int_r^{T-r} \int_{\Omega_1^r} (u^{\delta,\eps}(t+\tau,x)-u^{\delta,\eps}(t,x))^2   dx dt \\ 
	&\le& \int_r^{T-r} \int_{\Omega}\mu(x)   (u^{\delta,\eps}(t+\tau,x)-u^{\delta,\eps}(t,x))^2   dx dt\\
	&=& \int_r^{T-r} \int_{\Omega}\mu(x)   (u^{\delta,\eps}(t+\tau,x)-u^{\delta,\eps}(t,x)) \Big(\int_0^\tau u^{\delta,\eps}_t(t+s,x) ds \Big) dx dt\\
	&=&\left|\int_0^\tau \int_r^{T-r} \langle u_t^{\delta,\eps}(t+s,x),\mu(x)u^{\delta,\eps}(t+\tau,x)\rangle  dt ds\right|\\
	&&+\left|\int_0^\tau \int_r^{T-r} \langle u_t^{\delta,\eps}(t+s,x),\mu(x)u^{\delta,\eps}(t,x)\rangle  dt ds\right|\\
	&\le&2\tau \|u_t^{\delta,\eps}\|_{L^2(0,T;(H^1)^*(\Omega))}\|\mu u^{\delta,\eps}\|_{L^2(0,T;H^1(\Omega))}.
\end{eqnarray*}
We remark that
\begin{align*}
\|\mu u^{\delta,\eps}\|_{L^2(0,T;H^1(\Omega))}=&
\big(\int_0^T \int_{\Omega} \mu^2(u^{\delta,\eps})^2 + \mu^2|\nabla u^{\delta,\eps}|^2 + (u^{\delta,\eps})^2|\nabla\mu|^2 dx dt\big)^{1/2}\\
\le& \big( \int_0^T \int_{\Omega_1} (u^{\delta,\eps})^2 + |\nabla u^{\delta,\eps}|^2 + M^2|\nabla\mu|^2 dxdt \big)^{1/2}\\
\le&\sqrt{TM^2(|\Omega_1|+\|\nabla\mu\|_{L^2(\Omega_1)}^2)+C_1^2},
\end{align*}
where $C_1$ is the upper bound in \eqref{2.1}. Then we obtain
\begin{eqnarray*}
 &&\ds\int_r^{T-r} \int_{\Omega_1^r} (u^{\delta,\eps}(t+\tau,x)-u^{\delta,\eps}(t,x))^2   dx dt \\
 &&\qquad\le 2\tau C_3\sqrt{TM^2(|\Omega_1|+\|\nabla\mu\|_{L^2(\Omega_1)}^2)+C_1^2},
\end{eqnarray*}
which is independent of $\delta$ and $\eps$.
\end{proof}

Thus we conclude that $\{u^{\delta,\eps}\}$ is precompact in $L^2( Q_T)$ and that there exists a function $u^\eps  \in L^2( Q_T)$ such that $u^{\delta,\eps}$ converges strongly in $L^2( Q_T)$ along a subsequence. We are ready to show that the function $u^\eps(t,x)$ is a weak solution of Problem \eqref{Pe0}.

\begin{proof}[\bf Proof of Theorem \ref{thm:existence}]

From the Fr\'echet-Kolmogorov theorem, we deduce that there exists a function $u^\eps  \in L^2( Q_T)$ and a subsequence $\{u^{\delta_i,\eps}\}$, which we denote again by $u^{\delta,\eps}$ such that
$$u^{\delta,\eps} \to u^\eps \quad \text{ strongly in } L^2((0,T) \times \Omega)\text{ and a.e in $Q_T$}\quad \text{as $\delta\to 0$}.$$ 

Moreover, we deduce from \eqref{2.1} that 
\[u^{\delta,\eps}\rightharpoonup u^\eps\quad\text{weakly in } L^2(0,T;H^1(\Omega)) \quad\text{as }\delta\to 0,\]
and it follows from \eqref{2.3} that
\[u^{\delta,\eps}_t\rightharpoonup u^\eps_t\quad\text{weakly in } L^2(0,T;(H^1)^*(\Omega))\quad\text{as }\delta\to 0,\]
so that $u^\eps\in C([0,T];L^2(\Omega))$. Next we show that
\begin{equation}\label{3.7}
u^{\delta,\eps}(t,\cdot)\rightharpoonup u^\eps(t,\cdot)\mbox{ weakly in $(H^1)^*(\Omega)$, for all $0\le t\le T$.}
\end{equation} 
Indeed, for any $0\le\tau\le T$, for all test function $\phi\in H^1(\Omega)$,
\begin{equation}\label{3.8}
	\int_\Omega (u^{\delta,\eps}(\tau,x)-u_0(x))\phi dx=\int_0^\tau \int_\Omega u^{\delta,\eps}_t \phi dx dt =\int_0^\tau \langle u^{\delta,\eps}_t,\phi\rangle  dt.
\end{equation}
We remark that 
\begin{equation}\label{3.9}
	\lim_{\delta\to0}\int_0^\tau \langle u^{\delta,\eps}_t,\phi\rangle  dt = \int_0^\tau \langle u^{\eps}_t,\phi\rangle  dt=\int_\Omega (u^{\eps}(\tau,x)-u_0(x))\phi dx .
\end{equation}
We deduce from \eqref{3.8} and \eqref{3.9} that
\begin{equation}\label{3.10}
	\lim_{\delta\to0} \int_\Omega (u^{\delta,\eps}(\tau,x)-u_0(x))\phi dx=\int_\Omega (u^{\eps}(\tau,x)-u_0(x))\phi dx,
	\end{equation}
for all $\phi\in H^1(\Omega)$. 
	
Next we show that $u^\eps$ is a weak solution of Problem \eqref{Pe0}. Since $u^{\delta,\eps}$ is a classical solution of Problem \eqref{Ped}, it is also a weak solution of this problem. Thus it satisfies
\begin{eqnarray}\label{weak1} 
&&\int_0^T \int_{ \Omega} (-u^{\delta,\eps} \phi_t + D_{\delta,\eps}(x) \nabla u^{\delta,\eps} \cdot \nabla \phi - f(u^{\delta,\eps}) \phi) dx dt\nonumber \\
&&\qquad= \int_{ \Omega} u_0(x) \phi dx -\int_\Omega u^{\delta,\eps}(T,x)\phi(T,x) dx,
\end{eqnarray}
for all test functions $\phi\in C^1([0,T]\times \overline{\Omega})$.

Since $u^{\delta,\eps}\to u^\eps$ in $L^2((0,T)\times \Omega)$ as $\delta\to0$, we deduce that
\[
\int_0^T \int_\Omega u^{\delta,\eps}\phi_t \to \int_0^T\int_\Omega u^\eps \phi_t \quad\text{as }\delta\to0.
\] 
Moreover, we remark that
\[
D_{\delta,\eps}\to D_\eps\quad\text{strongly in }L^2(\Omega)\quad \text{as }\delta\to0
\]
so that $D_{\delta,\eps}\nabla\phi \to D_\eps\nabla\phi$ strongly in $L^2((0,T)\times\Omega)$. This combined with the fact that as $\delta\to 0$,
\[\nabla u^{\delta,\eps}\rightharpoonup\nabla u^\eps\quad \text{weakly in } L^2((0,T)\times \Omega)\]
implies that
\[\int_0^T \int_\Omega D_{\delta,\eps}\nabla u^{\delta,\eps}\cdot\nabla\phi\to \int_0^T\int_\Omega D_\eps \nabla u^\eps\cdot\nabla \phi\]
as $\delta\to 0$. Since $u^{\delta,\eps}$ converges to $u^\eps$ a.e. in $Q_T$ as $\delta\to0$, it follows from the continuity of $f$ that $f(u^{\delta,\eps})$ converges to $f(u^\eps)$ a.e. in $Q_T$ as $\delta\to0$. Moreover, since that $|f(u^{\delta,\eps})|\le M_f$, we deduce from the dominated convergence theorem that
\[\int_0^T\int_\Omega f(u^{\delta,\eps})\phi\to\int_0^T\int_\Omega f(u^\eps)\phi\]
as $\delta\to 0$. 

Since $u^{\delta,\eps}(T,\cdot)\rightharpoonup u^\eps(T,\cdot)\mbox{ weakly in }(H^1)^*(\Omega)$ by \eqref{3.7},
\[\int_\Omega u^{\delta,\eps}(T,x)\phi(T,x) dx\to \int_\Omega u^{\eps}(T,x)\phi(T,x) dx\quad\mbox{ as }\delta\to0.\]
Therefore the passing to the limit $\delta\to0$ in \eqref{weak1}, we conclude that $u^\eps$ satisfies the integral equality for all $\phi\in  C^1([0,T]\times \overline{\Omega})$
\begin{eqnarray}\label{integral eq}
&&\int_0^T \int_{ \Omega} (-u^{\eps} \phi_t + D_\eps(x) \nabla u^{\eps} \cdot \nabla \phi - f(u^{\eps}) \phi) dx dt\nonumber\\ 
&&\qquad= \int_{ \Omega} u_0(x) \phi dx - \int_\Omega u^\eps(T,x)\phi(T,x)dx.
\end{eqnarray}
Since $ C^1([0,T]\times \overline{\Omega})$ is dense in $H^1((0,T)\times\Omega)$, we deduce that the identity \eqref{integral eq} also holds for all $\phi\in H^1((0,T)\times\Omega)$.

Next we prove the uniqueness of the weak solution. First we remark
\[u^\eps_t = \nabla\cdot(D_\eps(x)\nabla u^\eps) + f(u^\eps)\quad\text{in }L^2(0,T;(H^1)^*(\Omega)).\]
Suppose that there exist two solutions $u^\eps_1$ and $u^\eps_2$ of Problem \eqref{Ped}. Setting $w:=u^\eps_2-u^\eps_1$, we have that
\[w_t = \nabla\cdot(D_\eps(x)\nabla w)+f(u^\eps_2)-f(u^\eps_1)\quad\text{in }L^2(0,T;(H^1)^*(\Omega)),\]
which implies
\[\int_0^\tau \langle w_t,w\rangle dt = \int_0^\tau \langle \nabla\cdot (D_\eps(x)\nabla w),w\rangle + \langle f(u^\eps_2)-f(u^\eps_1),w\rangle dt,\]
for all $0\le\tau\le T$, that is
\begin{align*}
\frac{1}{2}\int_\Omega w^2(\tau) &+ \int_0^\tau \int_{\Omega} D_\eps(x)|\nabla w|^2 dxdt = \int_0^\tau\int_\Omega w\left(\int_{u^\eps_1}^{u^\eps_2}f'(s) ds \right) dx dt\\
&= \int_0^\tau \int_\Omega w (u^\eps_2-u^\eps_1)\left(\int_0^1 f'(\theta u^\eps_1 + (1-\theta) u^\eps_2) d\theta \right) dx dt\\
&\le  \tilde{M}_f \int_0^\tau\int_\Omega w^2 dx dt,
\end{align*}
where $\tilde{M}_f :=\sup_{s\in[0,M]} f'(s)$, for all $0\le \tau \le T$. Applying Gronwall's lemma, we deduce that $w=0$ a.e. in $Q_T$.
\end{proof}

\section{Singular limit $\eps \to 0$} \label{sec:convergence}

In this section, we prove that the solution of Problem \eqref{Pe0} converges strongly in $L^2((0,T)\times \Omega_1)$ to the solution of \eqref{P00}
\begin{equation}\label{P00}
\begin{cases}
u_t= \Delta u+f(u), &(t,x)\in(0,T)\times\Omega_1\\
\displaystyle{\frac{\partial u}{\partial \nu}} = 0,  &(t,x)\in (0,T)\times \partial\Omega_1 \\
u(0,x) = u_0(x), & x\in\Omega_1,
\tag{$P_0^0$}
\end{cases} 
\end{equation}
and that the solution of Problem \eqref{Pe0} converges weakly in $L^2((0,T)\times \Omega_0)$ to the solution of Problem \eqref{Q00}
\begin{equation}\label{Q00}
	\begin{cases}u_t = f(u), & (t,x)\in(0,T)\times \Omega_0\\
		u(0,x) = u_0(x),&x\in\Omega_0.
	\end{cases}\tag{$Q_0^{0}$}
\end{equation}
The solution of \eqref{P00} is defined in a weak sense.
\begin{definition}
A function $u\in C([0,T];L^2(0,T))\cap L^2((0,T);H^1( \Omega_1)) \cap L^\infty((0,T) \times  \Omega_1)$ is called a weak solution of \eqref{P00} if
\begin{eqnarray}\label{4.1}
&&\int_0^T \int_{ \Omega_1} (-u \phi_t +  \nabla u \cdot \nabla \phi - f(u) \phi) dx dt \nonumber\\
&&\qquad= \int_{ \Omega_1} u_0(x) \phi(0,x) dx - \int_{ \Omega_1} u(T,x) \phi(T,x) dx,
\end{eqnarray}
for any test function $\phi\in H^1((0,T)\times \Omega_1)$.
\end{definition}
It is a standard that Problem \eqref{P00} possesses a weak solution and the weak solution is unique. Furthermore, this weak solution is actually a classical solution $u\in C^{1,2}((0,T)\times \Omega_1)\cap C([0,T]\times\overline{\Omega}_1)$. Therefore, to complete the proof of Theorem \ref{thm:convergence1}, we need to show that the limit of $u^\eps$ converges strongly in  $L^2((0,T)\times \Omega_1)$ and satisfies \eqref{4.1}.

\begin{proof}[\bf Proof of Theorem 1.2]
Since $0\le u^\eps\le M$ and \eqref{2.3}, there exists a function $\bar{u}\in L^\infty((0,T)\times \Omega)$ and a subsequence $\{u^{\eps_n}\}$ such that
\[u^{\eps_n} \rightharpoonup \bar{u} \quad \mbox{weakly in $L^2((0,T)\times \Omega)$ as $\eps_n\to0$,}\]
\[u^{\eps_n}_t \rightharpoonup \bar{u}_t \quad \mbox{weakly in $L^2(0,T;(H^1)^*(\Omega))$ as $\eps_n\to0$.}\]
%

Next we show that $u^{\eps_n} \to \bar{u}$ strongly in $L^2((0,T)\times\Omega_1)$. Indeed, the sufficient estimates on differences of time and space translates follow from \eqref{3.5} and \eqref{time trans omega1}. Moreover, since $u^\eps$ is bounded in $L^\infty((0,T)\times \Omega)$,
the property (2) in the Fr\'echet-Kolmogorov theorem is satisfied. 
Thus, we can apply Fr\'echet-Kolmogorov theorem. We conclude that there exists a subsequence of $\{u^{\eps_n}\}$ which we denote again by $\{u^{\eps_n}\}$ such that 
\[u^{\eps_n} \to \bar{u} \quad \mbox{strongly in $L^2((0,T)\times \Omega_1)$ and a.e. in $(0,T)\times \Omega_1$ as $\eps_n\to0$}.\]
 We will show below that $\bar{u}$ coincides with the unique solution of Problem \eqref{P00}. 
 \begin{lemma} \label{laplacian omega2}
For each $\phi\in H^1((0,T)\times\Omega)$,
\[
\int_0^T\int_{\Omega_0} D^\eps(x) \nabla u^\eps \cdot \nabla \phi dxdt \to 0, \text{ as }\eps \to 0.
\]
\end{lemma}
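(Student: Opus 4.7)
The plan is to bound the integral directly by Cauchy--Schwarz and then exploit the precise $\eps$-dependence in the a priori estimate \eqref{2.2}. On $\Omega_0$ the diffusivity satisfies $D_\eps(x)=\eps$, so the integrand carries an explicit factor of $\eps$, and this factor is what will beat the $1/\sqrt{\eps}$ blow-up of $\|\nabla u^\eps\|_{L^2((0,T)\times\Omega_0)}$.

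First I would apply Cauchy--Schwarz in $L^2((0,T)\times\Omega_0)$ to obtain
\[
\left|\int_0^T\!\int_{\Omega_0} D_\eps(x)\,\nabla u^\eps\cdot\nabla\phi\,dx\,dt\right|
=\eps\left|\int_0^T\!\int_{\Omega_0}\nabla u^\eps\cdot\nabla\phi\,dx\,dt\right|
\le \eps\,\|\nabla u^\eps\|_{L^2((0,T)\times\Omega_0)}\,\|\nabla\phi\|_{L^2((0,T)\times\Omega_0)}.
\]
Next I would invoke the a priori estimate \eqref{2.2}, which is uniform in $\delta$, and pass it to the limit $\delta\to 0$: since $\nabla u^{\delta,\eps}\rightharpoonup\nabla u^\eps$ weakly in $L^2((0,T)\times\Omega)$ (as established in the proof of Theorem \ref{thm:existence}), weak lower semicontinuity of the $L^2$-norm yields
\[
\|\nabla u^\eps\|_{L^2((0,T)\times\Omega_0)}\le \frac{C_2}{\sqrt{\eps}}.
\]
Combining these two inequalities gives
\[
\left|\int_0^T\!\int_{\Omega_0} D_\eps(x)\,\nabla u^\eps\cdot\nabla\phi\,dx\,dt\right|
\le \sqrt{\eps}\,C_2\,\|\nabla\phi\|_{L^2((0,T)\times\Omega_0)},
\]
and since $\phi\in H^1((0,T)\times\Omega)$ is fixed so that $\|\nabla\phi\|_{L^2((0,T)\times\Omega_0)}$ is a finite constant independent of $\eps$, the right-hand side tends to $0$ as $\eps\to 0$.

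There is no real obstacle here: the scaling $D_\eps=\eps$ on $\Omega_0$ was chosen precisely so that $\sqrt{D_\eps}\,\nabla u^\eps$ is uniformly bounded in $L^2$, and multiplying by another factor of $\sqrt{D_\eps}=\sqrt{\eps}$ in the Cauchy--Schwarz step yields the vanishing. The only mild point to be careful about is the transfer of the bound \eqref{2.2} from the regularised solutions $u^{\delta,\eps}$ to $u^\eps$, which is handled by weak lower semicontinuity as indicated above.
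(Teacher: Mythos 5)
Your proof is correct and follows essentially the same route as the paper: Cauchy--Schwarz on $\Omega_0$, the factor $\eps$ from $D_\eps$, and the bound $\|\nabla u^\eps\|_{L^2((0,T)\times\Omega_0)}\le C_2/\sqrt{\eps}$ from \eqref{2.2} combine to give a bound of order $\sqrt{\eps}$. Your extra remark on transferring \eqref{2.2} from $u^{\delta,\eps}$ to $u^\eps$ by weak lower semicontinuity is a detail the paper leaves implicit, and it is handled correctly.
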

\begin{proof}
We deduce from \eqref{2.2} that there exists a positive constant $C$ such that 
\[\|\nabla u^\eps\|_{L^2((0,T)\times \Omega)}\le \displaystyle{\frac{C}{\sqrt{\eps}}}\]
so that by the Cauchy-Schwarz inequality, 
\begin{align*}
|\int_0^T\int_{\Omega_0} D_\eps(x) \nabla u^\eps \cdot \nabla \phi dxdt| & \le \eps \|\nabla u^\eps\|_{L^2((0,T)\times \Omega_0)} \|\nabla \phi\|_{L^2((0,T)\times  \Omega_0)}\\
& \le \tilde{C}\sqrt{\eps}, \text{ as }\eps \to 0.
\end{align*}
\end{proof}
It follows from \eqref{2.2} that
\begin{equation*}
\nabla u^{\eps} \rightharpoonup \nabla \bar{u}  \quad \mbox{ weakly in } L^2((0,T)\times \Omega_1),
\end{equation*}
and since $|f(u^{\eps_n})|\le M_f$, we deduce that there exists a function
\[\chi\in L^\infty((0,T)\times\Omega)\]
and a subsequence of $\{u^{\eps_n}\}$ which we denote again by $\{u^{\eps_n}\}$ such that
\[f(u^{\eps_n})\rightharpoonup \chi \quad \mbox{weakly in $L^2((0,T)\times \Omega)$ as $\eps_n\to 0$.}\]
Moreover, since $u^{\eps_n}\to \bar{u}$ a.e. in $(0,T)\times \Omega_1$, 
\[f(u^{\eps_n})\to f(\bar{u}) \quad \mbox{a.e. in $(0,T)\times \Omega_1$}\]
and by Lebesgue's dominated convergence theorem,
\[f(u^{\eps_n})\to f(\bar{u}) \quad \mbox{strongly in $L^1((0,T)\times \Omega_1)$ as $\eps_n\to 0$.}\]
Thus 
\begin{equation}\label{chi:Omega_1}
	\chi = f(\bar{u})\quad \mbox{a.e in $(0,T)\times \Omega_1.$}
\end{equation}
We rewrite \eqref{integral eq} in the form
\begin{align*}
&\int_0^T \int_{\Omega_1}\{-u^{\eps_n}\phi_t + \nabla u^{\eps_n}\cdot\nabla\phi - f(u^{\eps_n})\phi\}\ dxdt\\
&\qquad + \int_0^T \int_{\Omega_0}\{-u^{\eps_n}\phi_t +  \eps\nabla  u^{\eps_n}\cdot\nabla\phi - f(u^{\eps_n})\phi\}\ dxdt\\
=&\int_{\Omega_1} u_0(x)\phi(0,x) dx +\int_{\Omega_0} u_0(x)\phi(0,x) dx - \int_{\Omega} u^{\eps_n}(T,x)\phi(T,x) dx
\end{align*}
for all $\phi\in H^1((0,T)\times \Omega)$, in which we let $\eps_n\to 0$ to obtain 
\begin{eqnarray}\label{weakform}
&\ds\int_0^T \int_{\Omega_1}(-\bar{u}\phi_t + \nabla\bar{u}\cdot\nabla\phi - f(\bar{u})\phi) dx dt + \int_0^T \int_{\Omega_0}(-\bar{u} \phi_t - \chi\phi) dx dt\nonumber\\
&\ds\quad=\int_{\Omega_1} u_0(x)\phi(0,x) dx + \int_{\Omega_0} u_0(x)\phi(0,x) dx - \int_{\Omega} \bar{u}(T,x)\phi(T,x) dx,
\end{eqnarray}
for all $\phi\in H^1((0,T)\times\Omega)$, since $u^{\eps_n}(T,x)\rightharpoonup\bar{u}(T,x)$ weakly in $(H^1)^*(\Omega)$. Take $\phi $ arbitrary in $H^1((0,T) \times \Omega)$ such that $\phi(T,x)=0$ for $x\in\Omega_0$ and $\phi=0$ in $(0,T)\times\Omega_1$. Then, \eqref{weakform} becomes 

\[
\int_0^T \int_{\Omega_0} (-\bar{u} \phi_t  - \chi \phi) dx dt  = \int_{\Omega_0 }u_0(x) \phi(0,x) dx,
\]
that is 
\begin{equation}\label{4.3}
- \int_0^T \langle {\bar{u}}_t,\phi\rangle dt - \int_0^T \int_{\Omega_0} \chi \phi dx dt  + \int_{\Omega_0 }{(\bar{u}}(0,x) -  u_0(x)) \phi(0,x) dx = 0.
\end{equation}
Taking $\phi$ arbitrary in $C_c^\infty((0,T) \times \Omega_0)$, we deduce that
\begin{equation}\label{eq:Omega2}
\bar{u}_t  =  \chi \quad\mbox{ for a.e. }(t,x)\in(0,T)\times\Omega_0.
\end{equation}
Now taking $\phi$ arbitrary in $H^1((0,T)\times\Omega)$ such that $\phi(T,x)=0$ for $x\in\Omega_0$ and $\phi=0$ in $(0,T)\times\Omega_1$, we deduce that $u(0,x)=u_0(x)$ a.e. for $x\in\Omega_0$.
\begin{lemma}
There holds $\chi = f(\bar{u})$ in $L^\infty((0,T)\times \Omega)$. 
\end{lemma}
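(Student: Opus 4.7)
By \eqref{chi:Omega_1} we already have $\chi=f(\bar u)$ a.e. on $(0,T)\times\Omega_1$, so only $(0,T)\times\Omega_0$ is at stake. My strategy is to compare $u^{\eps_n}$ with the solution $v(t,x)$ of the pointwise-in-$x$ Cauchy problem $v_t=f(v)$, $v(0,x)=u_0(x)$. Since $f$ is Lipschitz on $[0,M]$, $v$ is well-defined and satisfies $0\le v\le M$. If I can prove $\bar u=v$ a.e. on $(0,T)\times\Omega_0$, then combining with \eqref{eq:Omega2} gives $\chi=\bar u_t=v_t=f(v)=f(\bar u)$, which together with \eqref{chi:Omega_1} yields the identification on all of $\Omega$.

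To make the comparison rigorous, I first smooth the initial datum: choose $u_0^k\in C^2(\overline{\Omega_0})$ with $0\le u_0^k\le M$ and $u_0^k\to u_0$ uniformly, and let $v^k$ solve the ODE with initial datum $u_0^k$, so that $v^k$ is smooth in $(t,x)$ and $v^k\to v$ uniformly. Next, pick a cutoff $\eta\in C_c^\infty(\Omega_0)$, $0\le\eta\le 1$, extended by $0$ to all of $\Omega$, in order to avoid the interface $\Gamma$. Multiplying the equation $u^{\eps_n}_t=\eps_n\Delta u^{\eps_n}+f(u^{\eps_n})$ (valid on $\Omega_0$) by $\eta(u^{\eps_n}-v^k)$, subtracting the ODE $v^k_t=f(v^k)$ tested against the same quantity, and integrating by parts yields
\[
\tfrac12\tfrac{d}{dt}\!\int_{\Omega_0}\!\eta(u^{\eps_n}-v^k)^2\,dx\le -\eps_n\!\int_{\Omega_0}\!\eta|\nabla u^{\eps_n}|^2\,dx+L\!\int_{\Omega_0}\!\eta(u^{\eps_n}-v^k)^2\,dx+R_{n,k},
\]
where $L$ is the Lipschitz constant of $f$ on $[0,M]$ and $R_{n,k}$ gathers the residual terms $\eps_n\int\eta\,\nabla v^k\cdot\nabla u^{\eps_n}$ and $-\eps_n\int(u^{\eps_n}-v^k)\nabla\eta\cdot\nabla u^{\eps_n}$. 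By Cauchy--Schwarz together with the a priori bound $\sqrt{\eps_n}\,\|\nabla u^{\eps_n}\|_{L^2(\Omega_0)}\le C_2$ from \eqref{2.2}, I obtain $R_{n,k}=O(\sqrt{\eps_n})$ with constants depending on $\eta$ and $k$ but not on $n$. Crucially, because $\eta$ vanishes near $\partial\Omega_0$, no boundary term appears from the integration by parts.

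Applying Gronwall to $E_{n,k}(t):=\int_{\Omega_0}\eta(u^{\eps_n}-v^k)^2(t,\cdot)\,dx$, then integrating in $t$ and invoking weak lower semicontinuity of the $L^2$-norm as $\eps_n\to 0$ (recall $u^{\eps_n}\rightharpoonup\bar u$ weakly in $L^2((0,T)\times\Omega_0)$ while $v^k$ is fixed), and finally letting $k\to\infty$ with $u_0^k\to u_0$ and $v^k\to v$ uniformly, I arrive at
\[
\int_0^T\!\int_{\Omega_0}\eta\,(\bar u-v)^2\,dx\,dt=0.
\]
Since $\eta\in C_c^\infty(\Omega_0)$ is arbitrary, this forces $\bar u=v$ a.e. on $(0,T)\times\Omega_0$, completing the argument. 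The main difficulty I anticipate is exactly what motivates the cutoff: without $\eta$, integration by parts on $\Omega_0$ would leave the interface integral $-\int_\Gamma(u^{\eps_n}-v^k)\,\partial_{\bnu}u^{\eps_n}|_{\Omega_1}\,dS$ (after invoking the flux matching $\eps_n\partial_{\bnu}u^{\eps_n}|_{\Omega_0}=\partial_{\bnu}u^{\eps_n}|_{\Omega_1}$ across $\Gamma$), for which only an $H^{-1/2}(\Gamma)$ control of the trace of $\nabla u^{\eps_n}$ is uniformly available. Localising away from $\Gamma$ via $\eta$ cleanly removes this obstruction, at the cost of a routine exhaustion of $\Omega_0$ by compact subsets to recover the conclusion on the whole region.
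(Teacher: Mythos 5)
Your proof is correct, but it takes a genuinely different route from the paper's. The paper identifies $\chi$ on all of $\Omega$ simultaneously by a Minty-type monotonicity argument: the exponential shift $v^\eps=e^{-ct}u^\eps$ with $c=\sup_{[0,M]}f'$ makes $g(t,v)=e^{-ct}f(e^{ct}v)-cv$ nonincreasing in $v$, and the energy identity for $v^\eps$ together with lower semicontinuity of the gradient term under weak convergence lets one pass to the limit in $\int(g(t,v^\eps)-g(t,\psi))(v^\eps-\psi)\,dx\,dt\le0$ and conclude via the $\psi=\bar v-\lambda w$, $\lambda\to0$ trick. You instead observe that only $(0,T)\times\Omega_0$ is at stake (by \eqref{chi:Omega_1}) and identify $\bar u$ there with the explicit ODE flow through a localized $L^2$/Gronwall comparison, the cutoff $\eta\in C_c^\infty(\Omega_0)$ eliminating all interface terms and the bound $\sqrt{\eps_n}\,\|\nabla u^{\eps_n}\|_{L^2((0,T)\times\Omega_0)}\le C_2$ from \eqref{2.2} making the residual diffusion terms $O(\sqrt{\eps_n})$ for fixed $k$. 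Your argument is more elementary --- no monotonicity machinery --- and yields a stronger byproduct: combining the Gronwall bound with the triangle inequality gives \emph{strong} $L^2$ convergence of $u^{\eps_n}$ to the ODE solution on compact subsets of $\Omega_0$, which improves on the weak convergence asserted in Theorem \ref{thm:convergence2}. The price is the extra approximation layer ($u_0^k$ smooth so that $\nabla v^k$ exists, with limits taken in the order $n\to\infty$ then $k\to\infty$) and the final exhaustion of $\Omega_0$ by supports of cutoffs; the paper's argument needs neither, works uniformly across $\Gamma$, and would survive in situations where the limiting dynamics in $\Omega_0$ is not explicitly integrable. Two points to phrase carefully: ``multiplying the equation'' should be stated as testing the identity $u^{\eps_n}_t=\nabla\cdot(D_{\eps_n}\nabla u^{\eps_n})+f(u^{\eps_n})$, valid in $L^2(0,T;(H^1)^*(\Omega))$, against the admissible function $\eta(u^{\eps_n}-v^k)\in L^2(0,T;H^1(\Omega))$, with the chain rule $\tfrac{d}{dt}\tfrac12\int_\Omega\eta w^2\,dx=\langle w_t,\eta w\rangle$ justified by the standard Lions--Magenes lemma; and the final step $\chi=\bar u_t=f(v)=f(\bar u)$ uses that both sides of \eqref{eq:Omega2} are $L^\infty$ functions, so the distributional identity upgrades to an a.e.\ one.
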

	\begin{proof}
		We already proved that this property holds in $(0,T)\times \Omega_1$, but not yet in the whole domain $(0,T)\times\Omega$. We define 
		\[c:= \sup_{s\in[0,M]} f'(s),\]
		and set 
		\[v^\eps := e^{-ct}u^\eps,\quad \bar{v} := e^{-ct}\bar{u}\]
		and 
		\[g(t,v):= e^{-ct}f(e^{ct}v)-cv,\quad \tilde{\chi} := e^{-ct}(\chi-c\bar{u}),\quad \psi:=e^{ct}\phi.\] 
		We will apply a classical monotonicity argument which can be found for instance in \cite{Marion}. We recall that by Definition \ref{def:weak}, we have 
		\begin{eqnarray*}
					&&\int_0^T \int_\Omega (-u^\eps\phi_t + D_\eps(x)\nabla u^\eps\cdot\nabla\phi -f(u^\eps)\phi) dxdt \\
					&&\qquad= \int_\Omega u_0(x)\phi(0,x) dx - \int_\Omega u^\eps(T,x)\phi(T,x) dx,
					\end{eqnarray*}
				for all $\phi\in H^1((0,T)\times \Omega)$, which implies that
				\begin{eqnarray*}
					&&\int_0^T \int_\Omega (-v^\eps\psi_t + D_\eps(x)\nabla v^\eps\cdot\nabla\psi - g(t,v^\eps)\psi) dxdt \\
					&&\qquad= \int_\Omega u_0(x)\psi(0,x) dx - \int_\Omega v^\eps(T,x)\psi(T,x) dx,
				\end{eqnarray*}
				  for all $\psi\in H^1((0,T)\times \Omega)$, so that also
				\begin{equation}\label{4.5}
					\int_0^T \langle v^\eps_t,\psi\rangle  dt + \int_0^T\int_\Omega (D_\eps(x)\nabla v^\eps\cdot \nabla \psi - g(t,v^\eps)\psi)dxdt =0,
				\end{equation}
				 for all $\psi\in L^2(0,T; H^1(\Omega))$. We have that
		\begin{equation*}
			g_v(t,v^\eps) = e^{-ct}e^{ct}f'(e^{ct}v^\eps)-c= f'(e^{ct}v^\eps)-c\le0,
		\end{equation*}
	where we have used the fact that
	\begin{equation*}
		0\le u^\eps=e^{ct}v^\eps\le M.
	\end{equation*}
Since $g$ is decreasing in $v$ we deduce that
		\begin{equation*}
			\int_0^T \int_\Omega (g(t,v^\eps)-g(t,\psi) )(v^\eps-\psi) dx dt \le 0,
		\end{equation*}
		for $\psi\in L^2(0,T;H^1(\Omega))$. Thus,
		\begin{align}
			0\ge& \liminf_{\eps\to0}\int_0^T \int_\Omega (g(t,v^\eps)-g(t,\psi) )(v^\eps-\psi) dx dt \label{4.6}\\
			&= \liminf_{\eps\to0} (\int_0^T \int_\Omega g(t,v^\eps) v^\eps dx dt\nonumber\\
			&-\int_0^T \int_\Omega g(t,\psi) v^\eps dx dt -\int_0^T \int_\Omega (g(t,v^\eps)-g(t,\psi))\psi dx dt)	\nonumber
		\end{align}
	Substituting $\psi = v^\eps$ in \eqref{4.5}, we obtain
	\begin{equation}\label{4.7}
		\int_0^T\langle v^\eps_t,v^\eps\rangle  dt + \int_0^T\int_\Omega (D_\eps(x)\nabla v^\eps\cdot \nabla v^\eps - g(t,v^\eps)v^\eps)dxdt =0.
	\end{equation}
Combining \eqref{4.6} and \eqref{4.7} yields
	\begin{align*}	 
		0\ge& \liminf_{\eps\to0} (\int_0^T \langle v^\eps_t, v^\eps\rangle dt + \int_0^T \int_\Omega D_\eps(x)|\nabla v^\eps |^2dx dt\\
			&-\int_0^T \int_\Omega g(t,\psi) v^\eps dx dt -\int_0^T \int_\Omega (g(t,v^\eps)-g(t,\psi))\psi dx dt)\\
			=&\liminf_{\eps\to0} (\frac{1}{2}\int_\Omega |v^\eps(T)|^2 dx-\frac{1}{2}\int_\Omega |u_0|^2 dx + \int_0^T \int_\Omega D_\eps(x)|\nabla v^\eps|^2 dx dt\\
			&-\int_0^T \int_\Omega g(t,\psi) v^\eps dx dt -\int_0^T \int_\Omega (g(t,v^\eps)-g(t,\psi))\psi dx dt).
				\end{align*}
			Thus letting $\eps$ tends to $0$, we deduce that
			\begin{align}\label{ineq:vbar}
			0\ge & \frac{1}{2}\int_\Omega |\bar{v}(T)|^2 dx - \frac{1}{2} \int_\Omega |u_0|^2 dx + \int_0^T \int_{\Omega_1} |\nabla \bar{v}|^2  dx dt\\
			&-\int_0^T \int_\Omega g(t,\psi) \bar{v} dx dt -\int_0^T \int_\Omega (\tilde{\chi}-g(t,\psi))\psi dx dt\nonumber
			\end{align}
		
We remark that \eqref{chi:Omega_1} and \eqref{weakform} imply
	
	\begin{equation*}
		\int_0^T \langle \bar{u}_t,\phi\rangle  dt +\int_0^T \int_{\Omega_1} (\nabla\bar{u}\cdot\nabla\phi - \chi\phi) dx dt - \int_0^T\int_{\Omega_0}\chi\phi dx dt = 0,
	\end{equation*}
	for all $\phi \in L^2(0,T;H^1(\Omega))$, which in turn implies that
	\begin{equation*}
		\int_0^T \langle \bar{v}_t+c\bar{v},\psi\rangle  dt +\int_0^T \int_{\Omega_1} (\nabla\bar{v}\cdot\nabla\psi) dx dt - \int_0^T\int_{\Omega}e^{-ct}\chi\psi dx dt = 0,
	\end{equation*}
	\begin{equation}\label{eq:weakform vbar}
		\int_0^T \langle \bar{v}_t,\psi\rangle  dt +\int_0^T \int_{\Omega_1} (\nabla\bar{v}\cdot\nabla\psi) dx dt - \int_0^T\int_{\Omega}\tilde{\chi}\psi dx dt = 0,
	\end{equation}
for all $\psi \in L^2(0,T;H^1(\Omega))$. Setting $\psi=\bar{v}$ in \eqref{eq:weakform vbar} we deduce that
	\begin{equation}\label{eq:vbar}
 \frac{1}{2}\int_\Omega |\bar{v}(T)|^2 dx - \frac{1}{2} \int_\Omega |u_0|^2 dx + \int_0^T \int_{\Omega_1} |\nabla\bar{v}|^2 dx dt - \int_0^T\int_{\Omega}\tilde{\chi}\bar{v} dx dt = 0.
\end{equation}
Substituting \eqref{eq:vbar} into \eqref{ineq:vbar} yields
\begin{equation*}
	\int_0^T \int_{\Omega} (\tilde{\chi}-g(t,\psi))(\bar{v}-\psi) dx dt\le 0,
\end{equation*}
for all $\psi\in L^2(0,T;H^1(\Omega))$.

Taking $\psi = \bar{v}-\lambda w$, $\lambda>0$, $w\in L^2(0,T;H^1(\Omega))$, we deduce that
\begin{equation}\label{4.12}
	\int_0^T \int_\Omega (\tilde{\chi}-g(t,\bar{v}-\lambda w))w\le 0,
	\end{equation}
	for all $w\in L^2(0,T;H^1(\Omega))$. Letting $\lambda\to0$ in \eqref{4.12}, we obtain
	\begin{equation}\label{4.13}
		\int_0^T \int_\Omega (\tilde{\chi}-g(t,\bar{v}))w\le0,
\end{equation}
	for all $w\in L^2(0,T;H^1(\Omega))$. Setting $w=-w$ in \eqref{4.13}, we deduce that
	\begin{equation*}
		\int_0^T \int_\Omega (\tilde{\chi}-g(t,\bar{v}))w=0,
	\end{equation*}
	 which yields that
		\[\tilde{\chi} = g(t,\bar{v})\quad \mbox{a.e,}\]
		or else
		\begin{equation}\label{weakconvergence}
			\chi = f(\bar{u})\quad \mbox{a.e.}
			\end{equation}
	\end{proof}
	
Taking $\phi\in C^1_c([0,T)\times\Omega_1)$ arbitrary in \eqref{weakform} yields
\begin{equation}\label{weakform1}
	\int_0^T \int_{\Omega_1}(-\bar{u}\phi_t + \nabla\bar{u}\cdot\nabla\phi - f(\bar{u})\phi) dx dt = \int_{\Omega_1} u_0(x)\phi(0,x) dx ,
\end{equation}
for all $\phi\in C^1_c([0,T)\times\Omega_1)$. Then taking $\phi $ arbitrary in $C^\infty_c((0,T)\times \Omega_1)$, 
we deduce that $\bar{u}$ satisfies the partial differential equation 
\[
{\bar{u}}_t= \Delta {\bar{u}}+f({\bar{u}}),\quad \mbox{ in the sense of distributions in } (0,T)\times \Omega_1.
\]
Next we take $\phi $ arbitrary in $C^\infty_c([0,T)\times \Omega_1)$, to deduce that
\[
{\bar{u}}(0,x) = u_0(x), \quad x\in\Omega_1.
\]
Finally, taking $\phi $ arbitrary in $C^\infty_c((0,T)\times \overline{\Omega}_1)$ in \eqref{weakform1}, we deduce that 
\[
\displaystyle{\frac{\partial \bar{u}}{\partial \nu}} = 0, \quad \mbox{ in the sense of distributions on } (0,T)\times \Gamma.
\]
It then follows from standard arguments that $\bar{u}$ coincides with the unique classical solution $u$ of Problem \eqref{P00}.
\end{proof}

\begin{proof}[\bf Proof of Theorem \ref{thm:convergence2}]
	The result of Theorem \ref{thm:convergence2} follows from \eqref{eq:Omega2} and \eqref{weakconvergence}.
\end{proof}

\section{Numerical simulation}

In this section, we observe the evolution of the solution numerically and test the appearance of the Neumann boundary condition along the interior boundary $\partial\Omega_1$ when $\eps\to0$. For the test, we consider one dimensional problem with $\Omega=(0,4)$ and its interior domain $\Omega_1=(1,3)$. Then, the equation \eqref{Pe0} is written as
\begin{equation}\label{6.1}
	\begin{cases}
		u_t = (D_\eps u_x )_x + f(u), &t>0,\ 0<x<4, \\
		u_x(t,0)=u_x(t,4) =0,&t>0,\\
		u(0,x) = u_0(x),&0<x<4.
	\end{cases}
\end{equation}
We take the initial value, reaction function, and diffusivity as
\begin{equation}\label{6.2}
	u_0(x)=\sin(\pi x/4),\quad f(u)=-u(u-1/3)(u-1),
\end{equation}
and
\begin{equation}\label{6.3}
	D_\eps(x)=\begin{cases}
		1, & 1\le x\le3,\\
		\eps,& \text{otherwise}.
	\end{cases}
\end{equation}
The snapshots of the numerical solutions of \eqref{6.1}--\eqref{6.3} are given in Figure \ref{fig2} at the moment $t=0.1$ with four cases  of different epsilons. For this computation, the MTLAB function `pdepe' is used with a mesh size $\triangle x=0.001$. The behavior of the solution at the interface $x=1$ is magnified. We can observe a development of discontinuity of the gradient $|\nabla u|$ and the solution itself $u$. This is due to the continuity relation of the flux at the interface, which is $\eps |\nabla u(1-)|=|\nabla u(1+)|$.

\begin{figure}[h]
	\centering
	\includegraphics[width=0.49\textwidth]{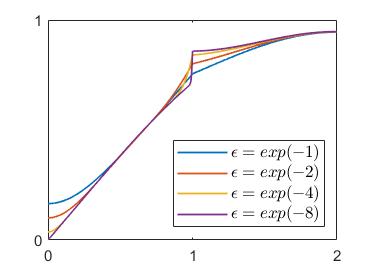}
	\includegraphics[width=0.49\textwidth]{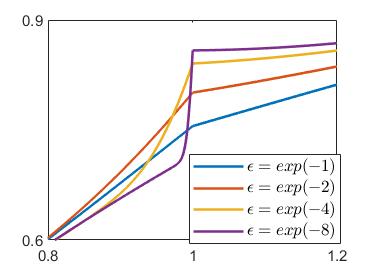}
	\caption{\small Solution snapshots at $t=0.1$ for different $\eps$ in the half of the domain and in a magnified one at the interface.}\label{fig2}
\end{figure}

The limit satisfies Problem \eqref{P00} in the interior domain which is
\begin{equation}\label{6.4}
	\begin{cases}u_t = u_{xx} + f(u), & t>0,\ 1<x<3,\\
		u_x(t,1)=u_x(t,3) = 0,& t>0,\\
		u(0,x) = u_0(x),&1<x<3.
	\end{cases}
\end{equation}
The same initial value in \eqref{6.2} is taken from the interior domain $(1,3)$. The snapshot of the numerical solution of the Neumann problem \eqref{6.4} is given in Figure \ref{fig3} together with the snapshots in Figure \ref{fig2} in the whole interior domain $\Omega_1=(1,3)$. We can observe that the solutions of \eqref{6.1} converge to the solution of the Neumann problem \eqref{6.4} as $\eps\to0$ from the below.

\begin{figure}[h]
	\centering
	\includegraphics[width=0.55\textwidth,height=47mm]{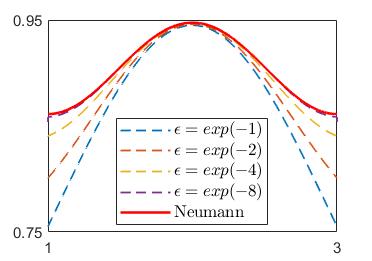}
	\caption{\small Convergence to Neumann problem \eqref{P00} ($t=0.1$).}\label{fig3}
\end{figure}

In Figure \ref{fig4}, the gradient values $|\nabla u(t,1+)|$ at the interface are given reducing $\eps$ with $\eps=e^{-j}$ for $j=0,\cdots,8$. They are given at three different time moments, $t=0.01,0.03$ and $0.05$. We can see that these gradient from the inside domain decays to zero as $\eps\to0$. The second figure is displayed with log-log scale. We can see that $\ln(|\nabla u|)$ and $\ln(\eps)$ satisfy a linear relation,
\[
\ln|\nabla u|=a\ln(\eps)+b,
\]
which is equivalently written as
\[
|\nabla u|=e^b\eps^a.
\]
Then, the three cases with $t=0.01,0.04$ and $0.09$ gives convergence order of
\[
|\nabla u(t,1+)|\cong
\begin{cases}
	e^{-0.5941}\eps^{0.4951},&t=0.01\\
	e^{-0.6112}\eps^{0.5172},&t=0.04\\
	e^{-0.6650}\eps^{0.5333},&t=0.09.
\end{cases}
\]
We can see that the gradient of the solution at the interface converges to zero as $\eps\to0$ approximately with order $O(\sqrt{\eps})$ and the convergence order increases as the time variable $t$ increases. This order is not surprising. For a fixed time $t>0$, the gradient $|\nabla u|$ in the region with diffusivity $D=1$ is supposed to be bounded. In the region with $D=\eps$, the gradient $|\nabla u|$ is of order $(1/\sqrt{\eps})$, which is from parabolic rescaling of the space variable. Therefore, the interface condition
\[
|\nabla u(t,1+)|=\eps|\nabla u(t,1-)|=O(\sqrt{\eps})\quad\mbox{as}\quad\eps\to0.
\]

\begin{figure}[h]
	\centering
	\includegraphics[width=0.49\textwidth]{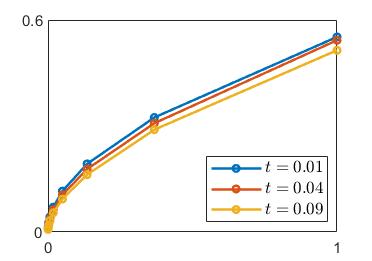}
	\includegraphics[width=0.49\textwidth]{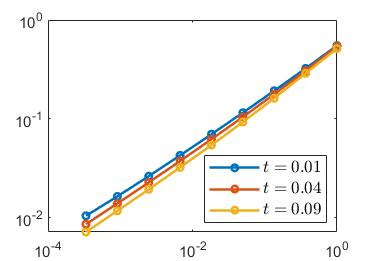}
	\caption{\small The gradient $|\nabla u|$ decays to zero as $\eps\to0$.}\label{fig4}
\end{figure}

In Figure \ref{fig5}, five snapshots of numerical solutions are given which show the long time behavior of the solution when $\eps$ is small. We took $\eps=e^{-16}$ for this simulation. We can see that a discontinuity is developing at the interface $x=1$ very quickly ($t=0.1$). However, the long time dynamics takes it over and eventually converges to a step function with the discontinuity at $x=0.4327$ decided by the initial value and the reaction function. This behavior is expected since the solution behavior in the region with $D=\eps$ follows the ODE solution $\dot u=f(u)$ as $\eps\to0$. Since the unstable steady state is $u=1/3$ and the initial value takes the value at $x=0.4327$, i.e., $u_0(0.4327)\cong1/3$, the discontinuity develops at the point $x=0.4327$.

\begin{figure}[h]
	\centering
	\includegraphics[width=0.65\textwidth]{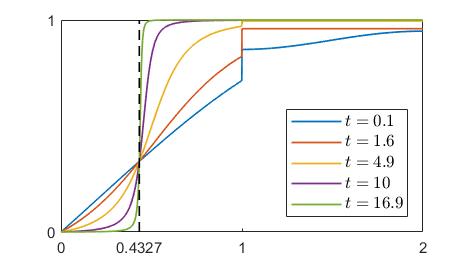}
	\caption{\small Solution snapshots with $\eps=e^{-16}$. Initially, a discontinuity develops at the interface $x=1$. Then, asymptotically, the solution converges to piecewise constant profile with a discontinuity at $x=0.4327$.}\label{fig5}
\end{figure}



\begin{thebibliography}{10}

\bibitem{Alfaro}
M. Alfaro, T. Giletti, Y.-J. Kim, G. Peltier, and H. Seo, On the modelling of spatially heterogeneous nonlocal diffusion: deciding factors and preferential position of individuals, J. Math. Biol. 84 (2022) 38.

\bibitem{004}
{H. Berestycki, H. Matano, and F. Hamel, Bistable traveling waves around an obstacle, Comm. Pure Appl. Math., 62 (2009) 729-788.}

\bibitem{Brezis}
{Haim Brezis, Functional Analysis, Sobolev Spaces and Partial Differential Equations, Springer, 2010}.

\bibitem{Chapman}
	Sydney Chapman, On the {B}rownian displacements and thermal diffusion of grains suspended in a non-uniform fluid,	Proc. Roy. Soc. Lond. A \textbf{119} (1928), 34--54.

\bibitem{010}
J. Chung, S. Kang, H.-Y. Kim, and Y.-J. Kim, Diffusion laws select boundary conditions, arXiv:2308.00416 (2023)

\bibitem{Crooks}
{E.C.M. Crooks, E.N. Dancer, D. Hilhorst, M.  Mimura, and H. Ninomiya,  Spatial segregation limit of a competition-diffusion system with Dirichlet boundary conditions. Nonlinear Analysis: Real World Applications, 5 (2004), 645-665.}

\bibitem{Hulshof}
{J. Hulshof, Bounded weak solutions of an elliptic-parabolic Neumann problem, American Mathematical Society, 303 (1987), 211-227}.

\bibitem{012}
{Y.J. Kim and H.J. Lim, Heterogeneous discrete-time random walk and reference point dependency, (2023), preprint}

\bibitem{Lunardi}
{A. Lunardi, Analytic semigroups and optimal regularity in parabolic problems, in: Progress in Nonlinear Differential Equations and their Applications, Vol. 16, Birkh\"auser, Basel, (1995).}

\bibitem{Marion}
{Martine Marion (1987), Attractor for reaction-diffusion equations: existence and estimate of their dimension, Applicable Analysis : An International Journal, 25:1-2, 101-147}.

\bibitem{001}
{Hiroshi Matano, Asymptotic Behavior and Stability of Solutions of Semilinear Diffusion Equations, Publ. Res. Inst. Math. Sci., 15 (1979), 401-454.}

\bibitem{Pao}
{C. V. Pao, Parabolic Systems in Unbounded Domains I. Existence and Dynamics, Journal of Mathematical Analysis and Applications 217 (1998), 129-160.}

\bibitem{002}
{M.J. Ward and D. Stafford, Metastable dynamics and spatially inhomogeneous equilibria in dumbbell-shaped domains, Studies in Applied Mathematics (1999), }

\bibitem{Wereide}
	M. Wereide, La diffusion d'une solution dont la concentration et la temperature sont variables, Ann. Physique \textbf{2} (1914), 67--83.



\end{thebibliography}
\end{document}